\def\be{\begin{equation}}
\def\ee{\end{equation}}
\def\bse{\begin{subequations}}
\def\ese{\end{subequations}}
\newtheorem{thm}{Theorem}
\newtheorem{lem}[thm]{Lemma}
\newtheorem{prop}{Proposition}
\newdefinition{rmk}{Remark}
\begin{document}

\begin{frontmatter}

\title{Mass conservative reaction diffusion systems describing cell polarity}
\author{Evangelos Latos\footnote{Institut f\"ur Mathematik und
    Wissenschaftliches Rechnen, Heinrichstra{\ss}e 36, 8010
    Graz, Austria. Email: \texttt{evangelos.latos@uni-graz.at}. }, 
     Takashi Suzuki\footnote{Center for Mathematical Modeling and Data Science, Osaka University. Email: \texttt{suzuki@sigmath.es.osaka-u.ac.jp}}
        }

\begin{abstract}
A reaction-diffusion system with mass conservation modelling cell polarity is considered. A range of the parameters is found where the solution converges exponentially to the constant equilibrium and the $\omega$-limit set of the solution is spatially homogeneous, containing the constant stationary solution as well as possible spatially homogeneous orbits.
\end{abstract}

\begin{keyword}
Global dynamics \sep Chemical reaction diffusion system \sep Fix-Caginalp equation \sep total mass conservation system.

\end{keyword}

\end{frontmatter}


\section{Introduction}

The present work studies the following mass conserved reaction-diffusion system 
\begin{eqnarray} 
& & u_t=D\Delta u+f(u,v), \nonumber\\ 
& & \tau v_t=\Delta v-f(u,v),\qquad \qquad \mbox{in $\Omega\times(0,T),$} \nonumber\\ 
& & \frac{\partial}{\partial \nu}(u,v)=0, \qquad \qquad \quad \quad \ \ \mbox{on $\partial\Omega\times(0,T),$} \nonumber\\ 
& & \left. (u,v)\right\vert_{t=0}=(u_0(x), v_0(x)), \quad \mbox{in $\Omega$} ,
 \label{rd0} 
\end{eqnarray} 
where $\Omega\subset{\bf R}^N$ is a bounded domain with smooth boundary $\partial\Omega$, $\nu$ is the outer unit normal vector, $D, \tau$ are positive constants, and $(u_0, v_0)=(u_0(x), v_0(x))\geq 0$, $(u_0, v_0)\not\equiv 0$, are the non-negative, non-trivial initial values, taken to be sufficiently smooth.

Given the sufficiently smooth nonlinearity $f=f(u,v)$, standard theory allows the existence of a unique  local-in-time classical solution $(u,v)=(u(\cdot, t), v(\cdot,t))$ to (\ref{rd0}), as it can be seen in \cite{ls13,lms15,mor12,mo10}.  The solution $(u,v)$ has the following total mass conservation property, 
\begin{equation} 
\frac{d}{dt}\int_\Omega u+\tau v\ dx=0. 
 \label{tmc}
\end{equation} 
The class of models, that we are going to study, were proposed in \cite{oi07} to describe cell polarity. The proposed mechanism shall separate different spieces inside the cell according to their diffusion coefficients, i.e. slow and fast diffusions shall localize the spieces near the membrane and in the cytosol, respectively. Three kind of molecules, are interacting. Each one of them has two phases, active and inactive which are characterized by slow and fast diffusions, respectively. The model problem (\ref{rd0}) focuses on these two phases of a single species, ignoring the interactions between the other species.

The model shall allow Turing pattern \cite{tur52}, which is the appearance of spatially inhomogeneous stable stationary states induced by diffusion. In \cite{oi07} the authors suggest  the following three models for this purpose,  
\begin{eqnarray} 
& & f(u,v)=-\frac{au}{u^2+b}+v, \nonumber\\ 
& & f(u,v)=-\alpha_1\left[\frac{ u+v}{\biggl(\alpha_2(u+v)+1\biggr)^2}-v\right], \nonumber\\ 
& & f(u,v)=\alpha_1(u+v)[(\alpha u+v)(u+v)-\alpha_2]  ,
 \label{models}
\end{eqnarray} 
where $a$, $b$, $\alpha,\alpha_1$, and $\alpha_2$ are positive constants.  

In \cite{mjek} the authors suggested

\begin{equation} 
f(u,v)=b_1v\left[\frac{\gamma u^2}{K^2+u^2}+k_0\right]-\delta u 
 \label{4th}
\end{equation} 
where $b_1,\gamma,\delta,k_0,K$ positive constants (see \cite{mjek}), system (\ref{rd0}) with the above reaction term will be referred in the following as the fourth model and is the main topic of study in this paper. The results of this paper can be directly applied to more general reaction terms of the type:
\[
f(u,v)=b_1v\left[\frac{\gamma u^\beta}{K^2+u^\beta}+k_0\right]-\delta u
\]
with $\beta>1$.

Main characteristics of the system are the following:
\begin{itemize}
\item Quasi positivity for $f(u,v)$ provides positivity for $(u,v)$: 
$$
f=f(u,v):\overline{\mathbb{R}}^2_+\to\mathbb{R}, \ \text{locally Lipschitz continuous with} \ f(0,v)\geq0\geq f(u,0)$$
Therefore, the solution is nonnegative, provided that nonnegative initial data are given.
\item 	Mass conservative reaction-diffusion system: $\displaystyle\frac{d}{dt}\int_\Omega u+\tau v \;dx=0\Rightarrow$
\begin{equation} \frac{1}{\vert\Omega\vert}\int_\Omega u+\tau v \;dx=\lambda=\frac{1}{\vert\Omega\vert}\int_\Omega u_0+\tau v_0 \;dx
 \label{totalmass}
\end{equation} 
\end{itemize}

For the global existence but also for uniform-in-time bounds of nonnegative classical solutions to this system in all space dimension we refer the interested reader to Theorem 1.1 in \cite{ftm}. Actually, the authors in \cite{ftm} consider an even more general class of systems where the reaction terms might have a (slightly super-)quadratic growth.

In this paper we give an answer to the natural question which rises next about the asymptotic behaviour of the solution and wether it converges to the equilibrium. So, is the solution to this 4th model (\ref{4th}) asymptotically spatially homogeneous or do we have a Turing paradigm (stable non-constant stationary state under the local enhancement and long-range inhibition)?

This work is organised as follows: in Section 2 we summarise what has been done in the previous relevant models. In Section 3 we present and prove some of the key features of the fourth model and we state our main Theorem \ref{Theorem}.
In Section 4 we prove our main result.

\section{Review of the previous work}
In the first and the second model, the stationary state is described by the elliptic eigenvalue problem with nonlocal term, with the eigenvalue associated with the total mass that is conserved in time. The stationary state has a variational functional $J$, while there is a Lyapunov function $L(u,v)$ for the non-stationary problem. This Lyapunov functional is reduced to the stationary variational functional, if the total mass of $(u,v)$ is prescribed. This remarkable structure, called semi-unfolding minimality, induces dynamical stability of the local minimizer of $J$. We will briefly revisit what has already been done for these models. 

\vspace{2mm} 

{\bf First model.} If we let 
\[ f(u,v)=h(u)+kv, \quad h(u)=-\frac{au}{u^2+b}, \ k=1, \] 
 the first model takes the form 
\begin{eqnarray} 
& & u_t=D\Delta u+h(u)+kv, \nonumber\\ 
& & \tau v_t=\Delta v-h(u)-kv, \qquad \ \ \mbox{in $\Omega\times(0,T)$}, \nonumber\\ 
& & \frac{\partial}{\partial \nu}(u,v)=0, \qquad \qquad \qquad \ \ \mbox{on $\partial\Omega\times(0,T)$}, \nonumber\\ 
& & \left. (u, v)\right\vert_{t=0}=(u_0(x), v_0(x)), \quad \mbox{in $\Omega$}.
 \label{rdm1a} 
\end{eqnarray} 

Henceforth, $C_i$, $i=1,2,\cdots, 9$ denote positive constants independent of $t$. Since this $h=h(u)$ is a smooth function of $u\in {\bf R}$ satisfying  
\begin{equation} 
h(0)=0\geq h(u) \geq -C_1, \quad u\geq 0,  
 \label{eqn:hu0}
\end{equation} 
if $0\leq (u_0,v_0)=(u_0(x), v_0(x))\in X=C^2(\overline{\Omega})^2$, then problem (\ref{rdm1a}) admits a unique classical solution $(u,v)=(u(\cdot,t), v(\cdot,t))$ uniformly bounded, and global-in-time (Theorem 1.1 in \cite{ftm}). Therefore, the orbit ${\cal O}=\{ (u(\cdot,t), v(\cdot,t))\}_{t\geq 0}$ is compact in $X$ and hence the $\omega$-limit set defined by 
\begin{equation} 
\omega(u_0, w_0)=\{ (u_\ast, w_\ast) \mid \mbox{$\exists t_k\uparrow +\infty $ s.t. $\Vert u(\cdot,t_k)-u_\ast, w(\cdot,t_k)-w_\ast\Vert_X=0$}\} 
 \label{omega-limit}
\end{equation} 
is nonempty, compact, and connected.
  
With
\[ w=Du+v,\quad \xi =1-\tau D, \] 
the system (\ref{rdm1a}) transforms into 
\begin{eqnarray} 
& & u_t=D\Delta u+h(u)-kDu+kw, \nonumber\\ 
& & \tau w_t+\xi u_t=\Delta w, \qquad \qquad \quad \ \qquad \mbox{in $\Omega\times(0,T)$}, \nonumber\\ 
& & \frac{\partial}{\partial \nu}(u,w)=0, \qquad \qquad \qquad \ \ \qquad \mbox{on $\partial\Omega\times(0,T)$}, \nonumber\\ 
& & \left. (u, w)\right\vert_{t=0}=(u_0(x), w_0(x)), \qquad \quad \mbox{in $\Omega$} 
 \label{rdm1} 
\end{eqnarray} 
for $w_0=Du_0+v_0$. In the stationary state we have 
\[ \Delta w=0 \ \mbox{in $\Omega$}, \quad \left. \frac{\partial w}{\partial \nu}\right\vert_{\partial\Omega}=0, \] 
and therefore, this $w$ is a constant denoted by $\overline{w}$. This $\overline{w}$ is prescribed by the initial value using (\ref{totalmass}):  
\begin{equation} 
\tau\overline{w}+\frac{\xi}{\vert\Omega\vert}\int_\Omega u \ dx =\lambda=\frac{1}{\vert\Omega\vert}\int_\Omega \tau w_0+\xi u_0 \ dx. 
 \label{lw}
\end{equation}  
We thus obtain 
\begin{equation} 
-D\Delta u=q(u)+\frac{k}{\tau}\left( \lambda-\frac{\xi}{\vert \Omega\vert}\int_\Omega u \ dx\right), \quad \left. \frac{\partial u}{\partial \nu}\right\vert_{\partial \Omega}=0 
 \label{stationary}
\end{equation} 
for 
\[ q(u)=h(u)-kDu. \] 

The set of stationary solutions to (\ref{rdm1a}), denoted by $E_\lambda$, is thus defined in accordance with $\lambda$ in (\ref{totalmass}), that is, $(u,v)\in E_\lambda$ if and only if $u=u(x)$ is a solution to (\ref{stationary}) for $\xi=1-\tau D$, and $v=\overline{w}-Du$, where $\overline{w}$ is a constant defined by 
\[ \overline{w}=\frac{1}{\tau}\left( \lambda-\frac{\xi}{\vert \Omega\vert}\int_\Omega u \right). \] 

By exploiting the above observations, \cite{mo10, mor12} studied the spectral analysis of the stationary solution. The purpose of the authors in \cite{ls13} was to study the previous results from the point of view of global dynamics.  In fact, with the use of the Lyapunov function, they showed the existence of a global-in-time solution to (\ref{rdm1a}) in $X=C^2(\overline{\Omega})^2$ with compact orbit.  The following theorem is proven by the existence of the Lyapunov functional to (\ref{rdm1a}), 
\[ \frac{d}{dt}\left\{ \xi \int_\Omega \frac{D}{2}\vert \nabla u\vert^2-Q(u) \ dx+\frac{\tau k}{2}\Vert w\Vert_2^2\right\} + \xi\Vert u_t\Vert_2^2+k\Vert \nabla w\Vert_2^2=0. \]

\begin{thm}[\cite{ls13}]\label{thm1}
If $\xi=1-\tau D>0$ it holds that $\omega(u_0,v_0)\subset E_\lambda$. 
\end{thm}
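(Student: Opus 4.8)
The plan is to apply the LaSalle invariance principle to the displayed Lyapunov functional, which I abbreviate as
\[
L(u,w)=\xi\int_\Omega \frac{D}{2}|\nabla u|^2-Q(u)\ dx+\frac{\tau k}{2}\|w\|_2^2 ,
\]
so that the dissipation identity reads $\frac{d}{dt}L+\xi\|u_t\|_2^2+k\|\nabla w\|_2^2=0$. Since $\xi>0$ and $k>0$, the functional $L$ is non-increasing along the orbit. First I would verify that $L$ is bounded from below: because the solution stays non-negative by quasi-positivity and $h\le 0$ on $[0,\infty)$ by (\ref{eqn:hu0}), the primitive $Q$ with $Q'=q=h-kDu$ satisfies $-Q(u)=-H(u)+\frac{kD}{2}u^2\ge 0$ for $u\ge 0$ (with $H'=h$, $H(0)=0$); combined with the manifestly non-negative terms $\frac{D}{2}|\nabla u|^2$ and $\frac{\tau k}{2}\|w\|_2^2$, this gives $L\ge 0$. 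Hence $L(u(\cdot,t),w(\cdot,t))$ decreases to a finite limit $L_\infty$ as $t\to\infty$.

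Next I would exploit the compactness of the orbit $\mathcal{O}$ in $X=C^2(\overline\Omega)^2$, already established above, which guarantees that $\omega(u_0,v_0)$ is non-empty, compact, connected, and invariant under the semiflow. Since $L$ is continuous on $X$ (convergence in $C^2$ passes through the continuous map $Q$, through $\nabla u$, and through $w=Du+v$), its value on the entire $\omega$-limit set equals the single constant $L_\infty$.

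The heart of the argument is then a rigidity step. Let $(u_\ast,w_\ast)\in\omega(u_0,v_0)$ and let $(\tilde u,\tilde w)$ be the solution of the transformed system (\ref{rdm1}) issuing from $(u_\ast,w_\ast)$; by invariance this orbit remains in the $\omega$-limit set, so $L(\tilde u(\cdot,t),\tilde w(\cdot,t))\equiv L_\infty$ and therefore $\frac{d}{dt}L\equiv 0$. The dissipation identity then forces $\xi\|\tilde u_t\|_2^2+k\|\nabla \tilde w\|_2^2\equiv 0$, and since $\xi,k>0$ we conclude $\tilde u_t\equiv 0$ and $\nabla\tilde w\equiv 0$. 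Thus $\tilde w$ is a spatial constant $\overline w$ and $\tilde u$ is time-independent; feeding this back into the first equation of (\ref{rdm1}) yields exactly the stationary problem (\ref{stationary}) for $u_\ast$, while the total-mass identity (\ref{totalmass})--(\ref{lw}), which is preserved along the flow, fixes the constant $\overline w$ in terms of the prescribed $\lambda$. Setting $v_\ast=\overline w-Du_\ast$ then shows $(u_\ast,v_\ast)\in E_\lambda$, which is the assertion.

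The step I expect to be the main obstacle is this rigidity argument: making fully rigorous that constancy of $L$ on the invariant set forces the dissipation to vanish \emph{identically}, rather than merely along a sequence, requires both the invariance of $\omega(u_0,v_0)$ and enough regularity to differentiate $L$ along the limit orbit. This is precisely where the sign condition $\xi=1-\tau D>0$ is indispensable, since it is what renders the dissipation functional positive-definite and allows one to read off $\tilde u_t\equiv 0$; for $\xi\le 0$ the quadratic form loses definiteness and the conclusion cannot be drawn in this way.
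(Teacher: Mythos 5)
Your proposal is correct and follows exactly the route the paper indicates: the displayed dissipation identity for the Lyapunov functional, positivity of the dissipation terms under $\xi=1-\tau D>0$, compactness and invariance of the $\omega$-limit set, and LaSalle's principle to force $u_t\equiv 0$ and $\nabla w\equiv 0$ on limit orbits, whence $w\equiv\overline w$ is fixed by the mass constraint (\ref{lw}) and $u$ solves (\ref{stationary}). The paper only sketches this (citing \cite{ls13}), and your filled-in details, including the lower bound $-Q(u)\ge 0$ for $u\ge 0$, are sound.
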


\begin{rmk}
From the result \cite{ftm} established later, the restriction on the space dimension $N=1,2,3$ in \cite{ls13}, is excluded for the compactness of ${\cal O}$. This extension is also valid to the second model described below. 
\end{rmk} 

The problem (\ref{stationary}) has a variational structure. Thus, $u=u(x)$ is a solution if and only if $J_\lambda'(u)=0$, where 
\begin{equation} 
J_\lambda(v)=\int_\Omega \frac{D}{2}\vert \nabla v\vert^2-Q(v)-\frac{k}{\tau}\lambda v \ dx -\frac{k\xi}{2\tau \vert \Omega\vert}\left( \int_\Omega v\right)^2, \quad v\in H^1(\Omega) 
 \label{jei}
\end{equation} 
for $Q'(u)=q(u)$. Then we obtain the dynamical stability of local minimizers of this functional.   

\begin{thm}[\cite{ls13}]\label{thm2}
Let $\xi=1-\tau D>0$ and $h=h(u)$ to be a real-analytic function in $u\in {\bf R}$.  Given $\lambda>0$, let $u_\ast=u_\ast(x)\in H^1(\Omega)$ be a local minimizer of $J_\lambda=J_\lambda(v)$ in (\ref{jei}), and put 
\[ w_\ast=\frac{1}{\tau}\left( \lambda-\frac{\xi}{\vert \Omega\vert}\int_\Omega u_\ast \ dx\right). \] 
Then this stationary solution $(u_\ast,w_\ast)$ to (\ref{rdm1}), for $(u_0, w_0)$ satisfying (\ref{lw}), is dynamically stable in $H^1(\Omega)\times L^2(\Omega)$. Thus, any $\varepsilon>0$ admits $\delta>0$ such that if $(u_0, w_0) \in H^1(\Omega)\times L^2(\Omega)$ satisfies 
\[ \Vert u_0-u_\ast\Vert_{H^1}^2+\Vert w_0-w_\ast \Vert_{2}^2<\delta, \quad \frac{1}{\vert\Omega\vert}\int_\Omega \tau w_0+\xi u_0 \ dx=\lambda, \] 
then it holds that  
\[ \sup_{t\geq 0}\left\{ \Vert u(\cdot, t)-u_\ast\Vert_{H^1}^2+\Vert w(\cdot, t)-w_\ast\Vert_{2}^2\right\}<\varepsilon \] 
for the solution $(u,w)=(u(\cdot, t), w(\cdot,t))$ to (\ref{rdm1}).  
\end{thm}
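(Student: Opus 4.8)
The plan is to exploit the \emph{semi-unfolding minimality} structure already visible in the Lyapunov identity. Writing
\[ L(u,w)=\xi\int_\Omega \frac{D}{2}|\nabla u|^2-Q(u)\ dx+\frac{\tau k}{2}\|w\|_2^2, \]
so that $\frac{d}{dt}L(u,w)=-\xi\|u_t\|_2^2-k\|\nabla w\|_2^2\le 0$ along (\ref{rdm1}), I would first restrict attention to the affine subspace $\mathcal{A}_\lambda=\{(u,w):\frac{1}{|\Omega|}\int_\Omega \tau w+\xi u=\lambda\}$ determined by the conserved mass (\ref{lw}), which is invariant under the flow. Splitting $w=\overline{w}+(w-\overline{w})$ into its mean $\overline{w}=\frac{1}{|\Omega|}\int_\Omega w$ and its mean-zero part, and using the constraint to substitute $\overline{w}=\frac{1}{\tau}(\lambda-\frac{\xi}{|\Omega|}\int_\Omega u)$, a direct computation yields
\[ L(u,w)=\xi J_\lambda(u)+\frac{\tau k}{2}\|w-\overline{w}\|_2^2+c(\lambda)\qquad \text{on }\mathcal{A}_\lambda, \]
with $c(\lambda)$ a constant depending only on $\lambda$. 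This reduces the Lyapunov functional on the constraint set to $\xi J_\lambda$ plus a nonnegative term that vanishes precisely when $w$ is the constant $\overline{w}$.

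The identity makes $(u_\ast,w_\ast)$, with $w_\ast=\overline{w}$ evaluated at $u_\ast$, a local minimizer of $L$ on $\mathcal{A}_\lambda$, since $u_\ast$ locally minimizes $J_\lambda$ and the extra term is $\ge 0$ with equality at $w=w_\ast$. I would then run the classical Lyapunov first-exit-time argument. Suppose, for contradiction, that stability fails: there exist $\varepsilon_0>0$, initial data $(u_0^n,w_0^n)\to(u_\ast,w_\ast)$ in $H^1\times L^2$ with $(u_0^n,w_0^n)\in\mathcal{A}_\lambda$, and first exit times $t_n$ with $\|u(\cdot,t_n)-u_\ast\|_{H^1}^2+\|w(\cdot,t_n)-w_\ast\|_2^2=\varepsilon_0$. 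Choosing $\varepsilon_0$ small enough that the $\varepsilon_0$-ball lies in the neighborhood where $u_\ast$ minimizes $J_\lambda$, both nonnegative terms of the identity are bounded below by $0$, so $L(u(t_n),w(t_n))\ge L(u_\ast,w_\ast)$; combined with monotonicity and continuity, $L(u(t_n),w(t_n))\le L(u_0^n,w_0^n)\to L(u_\ast,w_\ast)$, whence $L(u(t_n),w(t_n))\to L(u_\ast,w_\ast)$ and thus both $J_\lambda(u(t_n))-J_\lambda(u_\ast)\to 0$ and $\|w(t_n)-\overline{w}(t_n)\|_2\to 0$. Since $\overline{w}(t_n)-w_\ast$ is controlled by $\int_\Omega(u(t_n)-u_\ast)$, hence by $\|u(t_n)-u_\ast\|_{H^1}$, the full difference in $w$ is controlled once that in $u$ is. The crux is therefore to upgrade $J_\lambda(u(t_n))\to J_\lambda(u_\ast)=\min$ to $\|u(t_n)-u_\ast\|_{H^1}\to 0$, contradicting the exit condition.

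If $u_\ast$ were a nondegenerate (strict) local minimizer, coercivity $J_\lambda(u)-J_\lambda(u_\ast)\ge c\|u-u_\ast\|_{H^1}^2$ near $u_\ast$ would close the argument immediately; here I expect the main obstacle, because local minimizers of $J_\lambda$ need not be isolated and convergence of energies need not give convergence in $H^1$. This is precisely the reason for assuming $h$ real-analytic: $J_\lambda$ is then a real-analytic functional on $H^1(\Omega)$, and the {\L}ojasiewicz--Simon gradient inequality
\[ |J_\lambda(u)-J_\lambda(u_\ast)|^{1-\theta}\le C\,\|J_\lambda'(u)\|_{(H^1)^\ast},\qquad \theta\in(0,\tfrac12], \]
holds in a neighborhood of $u_\ast$. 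I would combine this with the dissipation relation, bounding $\|J_\lambda'(u)\|$ by $\|u_t\|_2$ through the $u$-equation of (\ref{rdm1}) together with $\|w-\overline{w}\|_2$, itself controlled by $\|\nabla w\|_2$ via Poincar\'e, to show that the trajectory has finite length in $H^1$ and remains near $u_\ast$; this delivers stability (and in fact convergence to a nearby equilibrium). Two technical points then remain: justifying the energy identity and the compactness of the orbit for merely $H^1\times L^2$ data, which I would handle by approximation with the smooth global solutions furnished by \cite{ftm} and the framework of Theorem \ref{thm1}; and controlling the coupling through $w$ in the {\L}ojasiewicz step, using that the mean-zero part of $w$ is slaved to the dissipation while its mean is an explicit function of $\int_\Omega u$.
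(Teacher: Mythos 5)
The paper itself contains no proof of this statement: it is quoted from \cite{ls13} in the review section, and the only hint given here is the displayed energy identity for $L(u,w)=\xi\int_\Omega \frac{D}{2}|\nabla u|^2-Q(u)\,dx+\frac{\tau k}{2}\|w\|_2^2$ together with the remark that ``semi-unfolding minimality induces dynamical stability of the local minimizer of $J$.'' Your proposal reconstructs exactly that mechanism, and your central identity checks out: on the invariant set $\frac{1}{|\Omega|}\int_\Omega \tau w+\xi u\,dx=\lambda$ one has $\overline{w}=\tau^{-1}\bigl(\lambda-\frac{\xi}{|\Omega|}\int_\Omega u\bigr)$, and expanding $\|w\|_2^2=\|w-\overline{w}\|_2^2+|\Omega|\,\overline{w}^2$ yields $L=\xi J_\lambda(u)+\frac{\tau k}{2}\|w-\overline{w}\|_2^2+\frac{k|\Omega|\lambda^2}{2\tau}$ --- \emph{provided} the nonlocal term in (\ref{jei}) carries the sign $+\frac{k\xi}{2\tau|\Omega|}\bigl(\int_\Omega v\bigr)^2$, which is also the sign forced by requiring $J_\lambda'(u)=0$ to reproduce (\ref{stationary}) and the sign used in the second model's functional (\ref{eqn:functional}); the minus sign printed in (\ref{jei}) appears to be a typo, so your computation is the correct one. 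Your first-exit-time argument, your diagnosis of the crux (a possibly degenerate minimizer, so that convergence of $J_\lambda(u(t_n))$ to the minimum does not by itself force $\|u(t_n)-u_\ast\|_{H^1}\to 0$), and your identification of real-analyticity as entering through the {\L}ojasiewicz--Simon inequality are all consistent with the structure the paper attributes to \cite{ls13}. The two points you defer are where the real work lies: (i) the {\L}ojasiewicz--Simon/finite-length step must be carried out for a system that is only gradient-\emph{like} --- the dissipation is $\xi\|u_t\|_2^2+k\|\nabla w\|_2^2$, and $\|J_\lambda'(u)\|$ is controlled by $\|u_t\|_2$ only up to an error of order $\|w-\overline{w}\|_2$, which must be absorbed via Poincar\'e into the $\|\nabla w\|_2^2$ dissipation; (ii) the well-posedness and validity of the energy identity for bare $H^1\times L^2$ data. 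Neither is a flaw in the strategy, but a complete write-up would have to supply both.
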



{\bf Second model.} By letting 
\[ f(u,v)=h(u+v)+\alpha_1v, \quad h(u+v)=-\frac{\alpha_1(u+v)}{(\alpha_2(u+v)+1)^2}, \] 
the model takes the form 
\begin{eqnarray} 
& & u_t=D\Delta u+h(u+v)+\alpha_1v, \nonumber\\ 
& & \tau v_t=\Delta v-h(u+v)-\alpha_1v, \qquad \ \ \mbox{in $\Omega\times(0,T)$}, \nonumber\\ 
& & \frac{\partial}{\partial \nu}(u,v)=0, \qquad \qquad \qquad \qquad \quad \mbox{on $\partial\Omega\times(0,T)$}, \nonumber\\ 
& & \left. (u, v)\right\vert_{t=0}=(u_0(x), v_0(x)), \qquad \quad \ \ \mbox{in $\Omega$}.
 \label{rdm2a} 
\end{eqnarray} 
Since $h=h(z)$ is a smooth function of $z\in {\bf R}$ satisfying  
\begin{equation} 
h(0)=0\geq h(z) \geq -C_2, \quad u\geq 0,   
 \label{eqn:hu}
\end{equation} 
if $(u_0,v_0)\in X=C^2(\overline{\Omega})^2$ with $(u_0, v_0)=(u_0(x), v_0(x))\geq 0$, the problem (\ref{rdm2a}) admits a uniformly bounded unique classical solution $(u,v)=(u(\cdot,t), v(\cdot,t))\geq 0$, global-in-time.  
 
With
\[ w=Du+v, \quad z=u+v, \quad \xi=\frac{1-\tau D}{\tau-1}, \quad \alpha=\frac{1-D}{\tau-1} \] 
and 
\[ g(z)=(1-D)h(z)-\alpha_1Dz \] 
the system (\ref{rdm2a}) transforms into 
\begin{eqnarray} 
& & z_t=D\Delta z+(w_t-D\Delta w+\alpha_1w)+g(z), \nonumber\\ 
& &  w_t+\xi z_t=\alpha\Delta w, \qquad \qquad \quad \ \qquad \mbox{in $\Omega\times(0,T)$}, \nonumber\\ 
& & \frac{\partial}{\partial \nu}(z,w)=0 \qquad \qquad \qquad \ \ \ \qquad \mbox{on $\partial\Omega\times(0,T)$}, \nonumber\\ 
& & \left. (z, w)\right\vert_{t=0}=(z_0(x), w_0(x)), \quad \qquad \mbox{in $\Omega$}, 
 \label{rdm2} 
\end{eqnarray} 
where $(z_0, w_0)=(u_0+v_0, Du_0+v_0)$. The orbit ${\cal O}=\{ (u(\cdot,t), w(\cdot,t))\}_{t\geq 0}$ to this (\ref{rdm2}) is thus compact in $X=C^2(\overline{\Omega})^2$ and hence the $\omega$-limit set defined by 
$$\omega(u_0, w_0)=\{ (u_\ast, w_\ast) \mid \mbox{$\exists t_k\uparrow +\infty $ s.t. $\Vert u(\cdot,t_k)-u_\ast, w(\cdot,t_k)-w_\ast\Vert_X=0$}\}$$
is nonempty, compact, and connected. 

First, total mass conservation arises in the form of  
\begin{equation} 
\frac{1}{\vert \Omega\vert}\int_\Omega \xi z+ w \ dx=\lambda=\frac{1}{\vert \Omega\vert}\int_\Omega \xi z_0+w_0 \ dx.  
 \label{eqn:lambda}
\end{equation} 
Second, there is a Lyapunov functional defined by  
\begin{equation} 
L=L(z,w)=\int_\Omega \frac{\alpha+D}{2}\vert \nabla w\vert^2+\frac{k}{2}w^2+\frac{\xi D}{2}\vert \nabla z\vert^2-\xi G(z) \ dx,   
 \label{eqn:lya1}
\end{equation} 
satisfying 
\begin{equation} 
\frac{dL}{dt} +\xi\Vert z_t\Vert^2_2+\Vert w_t\Vert^2_2+\alpha D\Vert \Delta w\Vert_2^2+\alpha k\Vert \nabla w\Vert_2^2=0.  
 \label{eqn:lya2}
\end{equation} 
Third, in the stationary state of (\ref{rdm2}), the component $w=w(x)$ is spatially homogeneous similarly, denoted by  $w=\overline{w}\in {\bf R}$. Hence it holds that  
\begin{equation} 
\overline{w}=\lambda-\frac{\xi}{\vert\Omega\vert}\int_\Omega z \ dx 
 \label{eqn:ws}
\end{equation} 
by (\ref{eqn:lambda}). Plugging (\ref{eqn:ws}) into the first equation of (\ref{rdm2}), we see that the stationary state of (\ref{rdm1}) is reduced to a single equation concerning $z=z(x)$, that is, 
\begin{equation} 
-D\Delta z=g(z)+k\left(\lambda-\frac{\xi}{\vert \Omega\vert}\int_\Omega z \ dx \right), \quad \left. \frac{\partial z}{\partial\nu}\right\vert_{\partial\Omega}=0. 
 \label{eqn:13}
\end{equation} 
This problem is the Euler-Lagrange equation corresponding to the variational functional 
\begin{equation} 
J_\lambda(z)=\int_\Omega \frac{D}{2}\vert \nabla z\vert^2-G(z)-k\lambda z  \ dx+\frac{k\xi}{2\vert\Omega\vert}\left( \int_\Omega z\ dx\right)^2, \quad z\in H^1(\Omega). 
 \label{eqn:functional}
\end{equation} 
Thus, the set of stationary solutions is associated with $\lambda$ in (\ref{eqn:lambda}), denoted by $E_\lambda$. We say that $(z, \overline{w})\in E_\lambda$, if $z\in H^1(\Omega)$ solves (\ref{eqn:13}) and $\overline{w}\in {\bf R}$ is defined by (\ref{eqn:ws}).

Then we obtain the following results similarly. 
\begin{thm}[\cite{lms15}]\label{thm3}
If $\xi=\frac{1-\tau D}{\tau-1}>0$ it holds that $\omega(z_0,w_0)\subset E_\lambda$. 
\end{thm}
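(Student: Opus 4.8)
The plan is to run a LaSalle-type invariance argument driven by the Lyapunov identity (\ref{eqn:lya2}). Since the orbit ${\cal O}$ is precompact in $X=C^2(\overline{\Omega})^2$, the solution $(z,w)$ is uniformly bounded in $C^2$, so the functional $L=L(z,w)$ in (\ref{eqn:lya1})---continuous on $H^1(\Omega)\times L^2(\Omega)$, which contains the orbit with uniform bounds---stays bounded along the orbit. By (\ref{eqn:lya2}) together with the hypothesis $\xi>0$ and the positivity of $\alpha, D, k$, every dissipation term on the left is nonnegative, hence $t\mapsto L(z(\cdot,t),w(\cdot,t))$ is nonincreasing and converges to a finite limit $L_\infty$ as $t\to\infty$. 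Integrating (\ref{eqn:lya2}) in time also shows $\int_0^\infty \bigl(\xi\Vert z_t\Vert_2^2+\Vert w_t\Vert_2^2+\alpha D\Vert\Delta w\Vert_2^2+\alpha k\Vert\nabla w\Vert_2^2\bigr)\,dt<\infty$.

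Next I would fix $(z_\ast,w_\ast)\in\omega(z_0,w_0)$ with $t_k\uparrow\infty$ and $(z(\cdot,t_k),w(\cdot,t_k))\to(z_\ast,w_\ast)$ in $X$. Continuity of $L$ gives $L(z_\ast,w_\ast)=L_\infty$, so $L\equiv L_\infty$ on the entire $\omega$-limit set. Because the $\omega$-limit set is invariant under the semiflow generated by (\ref{rdm2}), the trajectory $(\tilde z,\tilde w)$ issuing from $(z_\ast,w_\ast)$ remains inside $\omega(z_0,w_0)$ for all $t\ge 0$; along it $L$ is constant, so $\frac{dL}{dt}\equiv 0$. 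Feeding this back into (\ref{eqn:lya2}) forces each dissipation term to vanish identically, and since $\xi>0$ and $\alpha D,\alpha k>0$ this yields $\tilde z_t\equiv 0$, $\tilde w_t\equiv 0$, and $\nabla\tilde w\equiv 0$. Here the hypothesis $\xi>0$ is used in an essential way: it is precisely what makes $\Vert z_t\Vert_2^2$ controllable by the dissipation.

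Consequently the trajectory through $(z_\ast,w_\ast)$ is stationary and $\tilde w$ is spatially homogeneous, say $\tilde w=\overline{w}\in\mathbb{R}$, so that $w_\ast=\overline{w}$ with $\Delta w_\ast=0$. Setting $z_t=0$, $w_t=0$, $\Delta w=0$ in the first equation of (\ref{rdm2}) collapses it to the stationary identity $-D\Delta z_\ast=g(z_\ast)+k\overline{w}$, carrying homogeneous Neumann data inherited in the $C^2$ limit. The total-mass relation (\ref{eqn:lambda}) evaluated on the stationary pair gives (\ref{eqn:ws}), namely $\overline{w}=\lambda-\frac{\xi}{\vert\Omega\vert}\int_\Omega z_\ast\,dx$, and substituting this back recovers exactly (\ref{eqn:13}). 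Hence $(z_\ast,\overline{w})\in E_\lambda$, proving $\omega(z_0,w_0)\subset E_\lambda$.

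The main obstacle is the rigorous infinite-dimensional bookkeeping rather than any single computation: one must verify that (\ref{rdm2}) generates a continuous semiflow on $X$ whose $\omega$-limit set is genuinely invariant, and that $L$ is continuous in the $C^2$ topology along which convergence on $t_k$ takes place (the compactness of ${\cal O}$ in $C^2$ does the heavy lifting here, dominating the $H^1\times L^2$ norms entering $L$). A second delicate point is that the coupling term $w_t-D\Delta w$ on the right-hand side of the $z$-equation could obstruct the passage to the limit elliptic problem; this is exactly where $\nabla\tilde w\equiv 0$ (hence $\Delta\tilde w=0$) and $\tilde w_t\equiv 0$ on the $\omega$-limit set are indispensable, since they annihilate that coupling and leave the clean stationary equation for $z_\ast$.
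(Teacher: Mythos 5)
Your LaSalle argument driven by the Lyapunov identity (\ref{eqn:lya2}) is exactly the route the paper (and the cited reference \cite{lms15}) takes --- the theorem is stated immediately after the functional $L$ in (\ref{eqn:lya1}) and its dissipation identity precisely because this is the proof, and your handling of invariance, the vanishing of $z_t$, $w_t$, $\nabla w$ on the $\omega$-limit set, and the reduction to (\ref{eqn:13}) via (\ref{eqn:ws}) is correct. The one point you assume rather than verify is the positivity of $\alpha=\frac{1-D}{\tau-1}$, but this does follow from the hypothesis $\xi=\frac{1-\tau D}{\tau-1}>0$ (in either case $\tau>1$ or $\tau<1$), so all dissipation terms are indeed nonnegative and the argument closes.
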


\begin{thm}[\cite{lms15}]\label{thm4}
Let $\xi=\frac{1-\tau D}{\tau-1}>0$ and $h=h(z)$ to be a real-analytic function in $z\in {\bf R}$.  Given $\lambda>0$, let $z\ast =z_\ast(x)\in H^1(\Omega)$ be a local minimizer of $J_\lambda=J_\lambda(z)$ in (\ref{eqn:functional}), and put 
\[ w_\ast=\lambda-\frac{\xi}{\vert \Omega\vert}\int_\Omega z_\ast \ dx. \] 
Then the stationary solution $(z_\ast, w_\ast)$ to (\ref{rdm2}) is dynamical stable in $H^1(\Omega)\times L^2(\Omega)$. Thus, any $\varepsilon>0$ admits $\delta>0$ such that if $(z_0, w_0)\in H^1(\Omega)\times L^2(\Omega)$ satisfies 
\[ \Vert z_0-z_\ast\Vert_{H^1}^2+\Vert w_0-w_\ast \Vert_{2}^2<\delta, \quad \frac{1}{\vert\Omega\vert}\int_\Omega w_0+\xi u_0 \ dx=\lambda, \] 
then it holds that  
\[ \sup_{t\geq 0}\left\{ \Vert z(\cdot, t)-z_\ast\Vert_{H^1}^2+\Vert w(\cdot, t)-w_\ast\Vert_{2}^2\right\}<\varepsilon \] 
for the solution $(z,w)=(z(\cdot, t), w(\cdot,t))$ to (\ref{rdm2}). 
\end{thm}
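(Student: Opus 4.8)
\section*{Proof proposal}

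The plan is to transplant to the transformed system (\ref{rdm2}) the strategy used for Theorem~\ref{thm2} in \cite{ls13}, working with the Lyapunov functional (\ref{eqn:lya1}) and the variational functional (\ref{eqn:functional}). The whole scheme hinges on the \emph{semi-unfolding minimality} relating $L$ and $J_\lambda$, which I would establish first. Fix $z$ and set $\overline{w}=\lambda-\frac{\xi}{|\Omega|}\int_\Omega z\,dx$. Subject to the mass constraint (\ref{eqn:lambda}) the spatial mean of $w$ equals $\overline{w}$, and the $w$-part $\int_\Omega\frac{\alpha+D}{2}|\nabla w|^2+\frac{k}{2}w^2\,dx$ of $L$ is strictly convex and minimized exactly at the homogeneous state $w\equiv\overline{w}$, since $\int_\Omega|\nabla w|^2\ge0$ and $\int_\Omega w^2\ge\frac{1}{|\Omega|}\left(\int_\Omega w\right)^2$, each with equality only for constants. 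A direct computation then yields
\begin{equation}
L(z,w)\ \ge\ \xi\,J_\lambda(z)+\tfrac{k}{2}|\Omega|\lambda^2,
\label{sketch:semi}
\end{equation}
with equality if and only if $w\equiv\overline{w}$; more precisely $L(z,w)-\big[\xi J_\lambda(z)+\tfrac{k}{2}|\Omega|\lambda^2\big]=\frac{\alpha+D}{2}\|\nabla w\|_2^2+\frac{k}{2}\|w-\overline{w}\|_2^2$. Since $z_\ast$ is a local minimizer of $J_\lambda$ and $w_\ast$ is exactly the homogeneous state $\overline{w}$ associated with $z_\ast$, estimate (\ref{sketch:semi}) shows that $(z_\ast,w_\ast)$ is a local minimizer of $L$ on the manifold (\ref{eqn:lambda}), with $L(z_\ast,w_\ast)=\xi J_\lambda(z_\ast)+\tfrac{k}{2}|\Omega|\lambda^2=:L_\ast$.

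Next I would invoke the real-analyticity of $h$. Through $g$ and $G$ the functional $J_\lambda$ in (\ref{eqn:functional}) is a real-analytic functional on $H^1(\Omega)$ whose critical points solve (\ref{eqn:13}), and its second variation at $z_\ast$ is a compact perturbation of $-D\Delta$, hence Fredholm. The standard {\L}ojasiewicz--Simon theory then provides $C>0$, $\sigma>0$ and $\theta\in(0,\tfrac12]$ with
\begin{equation}
|J_\lambda(z)-J_\lambda(z_\ast)|^{1-\theta}\ \le\ C\,\|J_\lambda'(z)\|_{(H^1)^\ast},\qquad \|z-z_\ast\|_{H^1}<\sigma.
\label{sketch:ls}
\end{equation}
In tandem, the dissipation identity (\ref{eqn:lya2}) shows that $t\mapsto L(z(t),w(t))$ is nonincreasing with rate $\mathcal{D}:=\xi\|z_t\|_2^2+\|w_t\|_2^2+\alpha D\|\Delta w\|_2^2+\alpha k\|\nabla w\|_2^2=-\tfrac{dL}{dt}$.

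The heart of the argument is the standard energy--{\L}ojasiewicz computation, run as long as the orbit satisfies $\|z(t)-z_\ast\|_{H^1}<\sigma$. I would first establish the coupling estimate $[L(z,w)-L_\ast]^{1-\theta}\le C'\mathcal{D}^{1/2}$: the $w$-excess $\frac{\alpha+D}{2}\|\nabla w\|_2^2+\frac{k}{2}\|w-\overline{w}\|_2^2$ is controlled by $\|\nabla w\|_2^2$ through Poincar\'e (recall $\overline{w}$ is the mean of $w$), hence by $\mathcal{D}$, while the reduced excess $\xi|J_\lambda(z)-J_\lambda(z_\ast)|$ is handled by (\ref{sketch:ls}) after bounding $\|J_\lambda'(z)\|_{(H^1)^\ast}$ by $\mathcal{D}^{1/2}$ via the first equation of (\ref{rdm2}). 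Differentiating $[L-L_\ast]^\theta$ and using $\mathcal{D}=-\tfrac{dL}{dt}$ together with the coupling estimate then gives $-\frac{d}{dt}[L-L_\ast]^\theta\ge c\big(\|z_t\|_2+\|w_t\|_2\big)$, so that
\begin{equation}
\int_0^\infty\big(\|z_t\|_2+\|w_t\|_2\big)\,dt\ \le\ \tfrac{1}{c}\,[L(z_0,w_0)-L_\ast]^\theta .
\label{sketch:length}
\end{equation}
By continuity of $L$ on $H^1\times L^2$ the right-hand side is $o(1)$ as $(z_0,w_0)\to(z_\ast,w_\ast)$; a continuation argument shows the orbit cannot leave the $\sigma$-neighborhood once the initial excess is small, and (\ref{sketch:length}) combined with the gradient coercivity built into $L$ upgrades the $L^2$ displacement bound to the full $H^1\times L^2$ estimate $\sup_{t\ge0}\{\|z(t)-z_\ast\|_{H^1}^2+\|w(t)-w_\ast\|_2^2\}<\varepsilon$, as claimed; as a by-product the orbit converges to a point of $E_\lambda$, in agreement with Theorem~\ref{thm3}.

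I expect the principal obstacle to be precisely the coupling estimate $[L-L_\ast]^{1-\theta}\le C'\mathcal{D}^{1/2}$, i.e.\ bridging the dissipation $\mathcal{D}$ (which sees $z_t,w_t,\nabla w,\Delta w$) and the reduced gradient $\|J_\lambda'(z)\|_{(H^1)^\ast}$ (a functional of $z$ alone on the homogenized configuration $w\equiv\overline{w}$). This requires quantifying the homogenization $w\to\overline{w}$ through the terms $\alpha D\|\Delta w\|_2^2+\alpha k\|\nabla w\|_2^2$, estimating $w-\overline{w}$ by Poincar\'e, and using the first equation of (\ref{rdm2}) to identify $z_t$ with $-J_\lambda'(z)$ modulo the controlled $w$-deviation and the nonlocal term; keeping the constants $C',\theta$ and the radius $\sigma$ uniform along the flow is the delicate point, after which the remaining steps are routine.
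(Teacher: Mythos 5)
This theorem is quoted from \cite{lms15} and the paper itself gives no proof, but your strategy is exactly the one the paper attributes to that work: the semi-unfolding minimality identity $L(z,w)=\xi J_\lambda(z)+\frac{k}{2}\vert\Omega\vert\lambda^2+\frac{\alpha+D}{2}\Vert\nabla w\Vert_2^2+\frac{k}{2}\Vert w-\overline{w}\Vert_2^2$ on the mass-constrained manifold (which your computation with (\ref{eqn:lya1}) and (\ref{eqn:functional}) reproduces correctly), combined with the dissipation identity (\ref{eqn:lya2}) and the {\L}ojasiewicz--Simon inequality, for which the real-analyticity hypothesis on $h$ is precisely what is needed to handle possibly degenerate local minimizers. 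Your identification of the coupling estimate between the dissipation and $\Vert J_\lambda'(z)\Vert$ as the delicate step is also accurate, and it does go through via the first equation of (\ref{rdm2}), since $z_t=-J_\lambda'(z)+w_t-D\Delta w+k(w-\overline{w})$ expresses the reduced gradient in terms of quantities controlled by the dissipation.
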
 

\begin{rmk} 
We note the following facts. First, the local minimizer in Theorems \ref{thm2} and \ref{thm4} may be degenerate. Second, there is a correspondence between the Morse index of the linearized operator around the stationary solution $(u_\ast, z_\ast)$ or $(z_\ast, w_\ast)$ and that of $u_\ast$ or $z_\ast$ as a critical point of the variational functional $J_\lambda$.  This property is called the spectral comparison, and a result in this direction is obtained in \cite{lms15} for the second model. 
\end{rmk}

\section{The model and the result}

We skip the third model
\[ f(u,v)=\alpha_1(u+v)[(\alpha u+v)(u+v)-\alpha_2], \] 
because it does not satisfy the quasi-positivity. Hence in this work we consider the fourth model, (\ref{rd0}) for 
\begin{equation} 
f(u,v)=vb\left[\frac{\gamma u^2}{k^2+u^2}+k_0\right]-\delta u, 
  \label{model4}
\end{equation} 
where $\delta>0$. One can also consider more general reaction term used in \cite{he16},  
$$
f(u,v)=vb\left[\frac{\gamma u^m}{k^m+u^m}+k_0\right]-\delta u,\quad m>2 
$$
in the argument below. 

Putting   
\begin{equation} 
a(u)=b\left(\frac{\gamma u^2}{k^2+u^2}+k_0\right), 
 \label{22}
\end{equation} 
we obtain $f(u,v)=va(u)-\delta u$ and 
\begin{equation}\label{a(u)bounds}
a_0\equiv bk_0 \leq a(u)\leq b(\gamma+k_0)\equiv a_1,\quad 0\leq a'(u)\leq b\gamma\alpha(k)
\end{equation}
with
$$
\alpha(k)=\max_{u>0}\frac{2b\gamma k^2u}{(k^2+u^2)^2}=\frac{3\sqrt{3}b\gamma}{16k}.
$$
Therefore, this model is reduced to 
\begin{eqnarray} 
& & u_t=D\Delta u+va(u)-\delta u, \nonumber\\ 
& & \tau v_t=\Delta v-va(u)+\delta u,\qquad \qquad \mbox{in $\Omega\times(0,T),$} \nonumber\\ 
& & \frac{\partial}{\partial \nu}(u, v)=0, \qquad \qquad \qquad \qquad \ \ \mbox{on $\partial\Omega\times(0,T).$} \nonumber\\ 
& & \left. (u, v)\right\vert_{t=0}=(u_0(x), v_0(x)) \qquad \quad \ \ \mbox{in $\Omega$}. 
\label{4thSystem}
\end{eqnarray} 


The nonlinearity $a(u)$ in (\ref{22}) is not so wild. If it is a contact denoted by $a>0$, the system (\ref{4thSystem})  is linear, but a special form of the first model. Hence the stationary state is reduced to 
\begin{equation} 
-D\Delta u=-(\delta+a D)u+\frac{a}{\tau}\left( \lambda-\frac{\xi}{\vert \Omega\vert}\int_\Omega u\right), \quad \left. \frac{\partial u}{\partial \nu}\right\vert_{\partial\Omega}=0 
 \label{24}
\end{equation} 
for $\xi=1-\tau D$ and 
\begin{equation}  
\lambda=\frac{1}{\vert\Omega\vert}\int_\Omega u_0+\tau v_0 \ dx. 
 \label{26x}
\end{equation} 
There is a unique spatially homogeneous solution to (\ref{24}), that is, 
\[ u_\ast=\frac{a\lambda}{a+\tau \delta}. \] 
The linearized operator around this $u_\ast$ is given by 
\[ L\varphi=-D\Delta \varphi+(\delta+aD)\varphi+\frac{a\xi}{\tau\vert\Omega\vert}\int_\Omega \varphi \ dx, \quad \left. \frac{\partial\varphi}{\partial\nu}\right\vert_{\partial\Omega}=0. \] 
 Using the eigenvalues and eigenfunctions of $-\Delta$ under the Neumann boundary condition, we see that this $L$ is non-degenerate always. Thus there is no Turing pattern in this case i.e in the case when $a(u)$ is a constant. 
 
We can actually confirm the linearized stability of this spatially homogeneous stationary solution $(u_\ast, v_\ast)$ to (\ref{4thSystem}) for $v_\ast$ satisfying $\lambda=u_\ast+\tau v_\ast$. In fact, this linearized equation takes the form 
\begin{eqnarray*} 
& & \frac{\partial}{\partial t}\left( \begin{array}{c} 
z \\ 
w \end{array} \right)=\left( \begin{array}{cc} 
D\Delta -\delta & a \\ 
\tau^{-1}\delta & \tau^{-1}\Delta-\tau^{-1}a \end{array} \right) 
\left( \begin{array}{c} 
z \\ 
w \end{array} \right) \quad \mbox{in $\Omega\times (0,T)$} \\ 
& & \left. \frac{\partial}{\partial\nu}(z,w)\right\vert_{\partial\Omega}=0, \quad \frac{1}{\vert \Omega\vert}\int_\Omega z+\tau w  \ dx=0. 
\end{eqnarray*} 
Using the eigenvalues and eigenfunctions of $-\Delta$ under the Neumann boundary condition again, we see that all the eigenvalues of this linearized operator is real and negative. Hence $(u_\ast, v_\ast)$ is asymptotically stable. In spite of these simple profiles of the solution for the the case that $a(u)=a>0$ is a  constant, the global dynamics of (\ref{4thSystem}) for (\ref{22}) is not subject to a Lyapunov functional. 

To confirm this property, we take a look at the stationary problem to \eqref{4thSystem}: 
\begin{eqnarray} 
& & -D\Delta u+\delta u=va(u), \nonumber\\ 
& & -\Delta v=-va(u)+\delta u \quad \mbox{in $\Omega$,} \nonumber\\ 
& & \left.\frac{\partial}{\partial \nu}(u,v)\right\vert_{\partial\Omega}=0, 
 \label{4thSystemStationary}
\end{eqnarray} 
with 
\begin{equation} 
\frac{1}{\vert\Omega\vert}\int_\Omega u+\tau v\;dx=\lambda.
 \label{27}
\end{equation} 
By the argument in the previous section, the function $w=Du+v$ in (\ref{4thSystemStationary}) is a constant denoted by $\overline{w}$, which  is determine by (\ref{27}):  
\[ \overline{w}=\frac{1}{\tau}\left( \lambda-\frac{\xi}{\vert\Omega\vert}\int_\Omega u \ dx\right). \] 
Therefore, the system \eqref{4thSystemStationary} is reduced to 
\begin{eqnarray} 
& & -D\Delta u+\left(\delta+Da(u)\right)u=\frac{a(u)}{\tau}\left(\lambda-\frac{\xi}{\vert\Omega\vert}\int_\Omega u\;dx\right) \ \mbox{in $\Omega$}, \quad \left.\frac{\partial u}{\partial \nu}\right\vert_{\partial\Omega}=0. 
 \label{4thNonlocalEllipticProblem}
\end{eqnarray} 
We see that this (\ref{4thNonlocalEllipticProblem}) admits no variational functional unless $a(u)$ is a constant as in (\ref{24}). Therefore, any Lyapunov function is expected in the non-stationary problem (\ref{4thSystem}). 

The first observation is the existence of a unique spatially homogeneous stationary solution to (\ref{4thSystem}). 

\begin{prop}\label{pro1}
For every $\lambda>0$ there exists a unique $(u_*,v_*)\in {\bf R}^2$ such that 
\begin{equation} 
u_*+\tau v_*=\lambda, \quad f(u_*,v_*)=0, 
 \label{29}
\end{equation} 
for $f=f(u,v)$ defined by (\ref{model4}).
\end{prop}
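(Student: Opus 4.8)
The plan is to eliminate one unknown using the two constraints and reduce the claim to the existence of a unique root of a scalar function. Since $a(u)>0$ everywhere by \eqref{a(u)bounds}, the equation $f(u_*,v_*)=v_*a(u_*)-\delta u_*=0$ is equivalent to $v_*=\delta u_*/a(u_*)$; substituting this into the mass relation $u_*+\tau v_*=\lambda$ yields the single scalar equation
\[
\Phi(u):=u+\tau\delta\,\frac{u}{a(u)}=\lambda .
\]
Conversely, every root of $\Phi(u)=\lambda$ produces a pair solving \eqref{29}, so the Proposition is equivalent to the statement that $\Phi(u)=\lambda$ has exactly one real root. A preliminary observation localizing the root is that any solution of \eqref{29} must have $u_*\in(0,\lambda)$: for $u\le 0$ the mass constraint forces $v=(\lambda-u)/\tau>0$ while $f=0$ forces $v=\delta u/a(u)\le 0$, and for $u\ge\lambda$ the signs are reversed, so in both cases $f(u,v)=0$ is impossible. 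Hence it suffices to study $\Phi$ on $(0,\lambda)$, or equivalently the function $g(u):=(\lambda-u)a(u)-\tau\delta u$, which (after clearing the denominator) has the same zeros.

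First I would establish existence by the intermediate value theorem. As $a$ is continuous and bounded, $\Phi$ is continuous with $\Phi(0)=0<\lambda$ and $\Phi(u)\ge u\to+\infty$; equivalently $g(0)=\lambda a_0>0>-\tau\delta\lambda=g(\lambda)$ by \eqref{a(u)bounds}. This already yields a root $u_*\in(0,\lambda)$, and the associated $v_*=(\lambda-u_*)/\tau>0$ satisfies $f(u_*,v_*)=0$, so a nonnegative pair in \eqref{29} exists.

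The main obstacle is uniqueness, which amounts to showing that $\Phi$ is strictly increasing, i.e.\ that no spurious roots occur. Differentiating,
\[
\Phi'(u)=1+\tau\delta\,\frac{a(u)-u\,a'(u)}{a(u)^{2}},
\]
so monotonicity is governed by the sign of $a(u)-u\,a'(u)$. Using the explicit form \eqref{22} one computes
\[
a(u)-u\,a'(u)=bk_{0}+b\gamma\,\frac{u^{2}(u^{2}-k^{2})}{(k^{2}+u^{2})^{2}},
\]
and minimizing the last fraction over $u>0$ (it attains the value $-1/8$ at $u^{2}=k^{2}/3$) gives $a(u)-u\,a'(u)\ge b(k_0-\gamma/8)$. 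Consequently $\Phi'\ge 1>0$ as soon as $8k_0\ge\gamma$, which forces monotonicity and hence uniqueness directly. When $8k_0<\gamma$, however, $u\,a'(u)$ can exceed $a(u)$ near $u\sim k$, so $u\mapsto u/a(u)$ need not be monotone and the bare sign argument fails; one must then control the (bounded) negative contribution, using $\tau\delta\,(a-ua')/a(u)^{2}\ge-\tau\delta\,b(\gamma/8-k_0)/a_0^{2}$, and verify the quantitative estimate $\tau\delta\,b(\gamma/8-k_0)<a_0^{2}$ to keep $\Phi'>0$. I expect this monotonicity step to be the crux: existence is routine, whereas excluding multiple roots genuinely exploits the interplay of the parameters — equivalently, that the cubic $G(u)=(k^{2}+u^{2})g(u)$, whose coefficient signs permit one \emph{or} three positive roots by Descartes' rule, in fact carries only one — and it is precisely here that a restriction on the parameters is liable to be required.
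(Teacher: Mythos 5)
Your reduction to a scalar equation and your existence argument are correct and do the same work as the paper's: the paper rewrites \eqref{29} as $A(u_*)=B(u_*)$ with $A(u)=\frac{\tau\delta}{b}+\gamma+k_0+\frac{\lambda\tau\delta}{b(u-\lambda)}$ and $B(u)=\frac{\gamma k^2}{k^2+u^2}$, and produces a root in $(0,\lambda)$ from $B(0)<A(0)$ and $A(\lambda^-)=-\infty<B(\lambda)$, which is your intermediate value step in different clothing. Your preliminary localization of any solution to $u_*\in(0,\lambda)$ is also needed (the Proposition allows $(u_*,v_*)\in{\bf R}^2$) and is correct.

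The crux is uniqueness, and your suspicion there is exactly right. The paper settles it by asserting that $A$ is convex and $B$ is concave on $[0,\lambda]$, so that $A-B$ is convex and can vanish only once; but $A''(u)=\frac{2\lambda\tau\delta}{b(u-\lambda)^3}<0$ for $u<\lambda$, so $A$ is in fact concave, and $B''(u)=-2\gamma k^2\frac{k^2-3u^2}{(k^2+u^2)^3}$ changes sign at $u=k/\sqrt{3}$, so $B$ is convex on part of $[0,\lambda]$ whenever $\lambda>k/\sqrt{3}$. Thus the paper's uniqueness argument does not close, and no argument can close it without a parameter restriction: taking $b=\gamma=\delta=\tau=1$, $k=0.1$, $k_0=0.001$, $\lambda=1$, the function $g(u)=(\lambda-u)a(u)-\tau\delta u$ satisfies $g(0)=10^{-3}>0$, $g(0.005)\approx-1.5\times10^{-3}<0$, $g(0.02)\approx 1.9\times10^{-2}>0$, $g(1)=-1<0$, so \eqref{29} has at least three solutions in $(0,\lambda)\times(0,\lambda/\tau)$. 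Your formula $a(u)-ua'(u)=bk_0+b\gamma\frac{u^2(u^2-k^2)}{(k^2+u^2)^2}$, the minimum value $b(k_0-\gamma/8)$ at $u^2=k^2/3$, and the resulting sufficient conditions $8k_0\ge\gamma$ or $\tau\delta b(\gamma/8-k_0)<a_0^2$ are all correct (and both fail in the example above). So the only ``gap'' in your attempt --- that uniqueness is obtained merely conditionally --- is in fact a gap in the statement and in the paper's own proof; what you should do is record your condition as an explicit hypothesis rather than try to remove it.
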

\begin{proof}
Equality (\ref{29}) is equivalent to 
\[ A(u_\ast)=B(u_\ast), \] 
where 
$$
A(u)=\frac{\tau\delta}{b}+\gamma+k_0+\frac{\lambda\tau\delta}{b(u-\lambda)},\quad B(u)=\frac{\gamma k^2}{k^2+u^2}. 
$$	
The functions $u\in [0,\lambda]\mapsto A(u)$ and $u\in [0,\lambda]\rightarrow B(u)$ are convex and concave, respectively, and hence we obtain the result by 
\[ B(0)<A(0), \quad A(\lambda)=-\infty<B(\lambda).\] 
\end{proof} 

Put $(u_\ast, v_\ast)=(u_\ast(\lambda), v_\ast(\lambda))$ in Proposition \ref{pro1}. The linearized operator around the solution $u_*=u_*(\lambda)$ to (\ref{4thNonlocalEllipticProblem}) is given by  
\begin{eqnarray} 
& &L\varphi = -D\Delta \varphi+(\delta+Da(u_\ast))\varphi+Da'(u_\ast)u_\ast\varphi \\ 
& & \quad -\frac{a'(u_\ast)}{\tau}\left( \lambda-\frac{\xi}{\vert \Omega\vert}\int_\Omega u_\ast \ dx \right) \varphi+\frac{a(u_\ast)\xi}{\tau\vert \Omega\vert}\int_\Omega \varphi\;dx \ \mbox{in $\Omega$}, \quad \left.\frac{\partial \varphi}{\partial \nu}\right\vert_{\partial\Omega}=0. 
 \label{4thNonlocalEllipticProblemEquilibr}
\end{eqnarray} 
We examine the degeneracy of this $L$ in accordance with the eigenvalues $0=\mu_1<\mu_2\leq \cdots \rightarrow\infty$ and the eigenfunctions $\varphi_j$ in $\Vert \varphi_j\Vert_2=1$ for $j=1,2,\cdots$. 

First, for $\mu_1=0$ it hold that $\phi_1=\mbox{constant}$, and this condition is equivalent to 
\[ \delta+Da(u_\ast)+Da'(u_\ast)u_\ast+\frac{a(u_\ast)\xi}{\tau\vert\Omega\vert}\int_\Omega u_\ast \ dx=\frac{a'(u_\ast)}{\tau}\lambda, \] 
although the possible bifurcated object is spatially homogeneous. Second, for $\mu_j$, $j\geq 2$, it holds that  $\int_\Omega\phi_j=0$, and the above degeneracy condition is reduced to 
\[ D\mu_j+\delta+Da(u_\ast)+Da'(u_\ast)u_\ast+\frac{a'(u_\ast)\xi}{\tau\vert \Omega\vert}\int_\Omega u_\ast \ dx=\frac{a'(u_\ast)}{\tau}\lambda. \] 
Then, there is a chance of a spatially inhomogeneous bifurcation.


From the above analysis our main target is revealed. We want to prove that when $D\gg 1$ is the case, in relation to $\lambda$, the solution $(u,v)$ is asymptotically spatially homogeneous. The region that this holds cannot be the entire one because of the possible spatially inhomogeneous bifurcation of stationary states suggested above. Our result in the paper is the following theorem valid under the technical assumption 
\begin{equation} 
N\leq3,  \quad  2\vert \xi\vert a_1<\tau^3(\mu_2D+\delta).  
 \label{technical}
\end{equation} 
Recall that $\mu_2>0$ is the second eigenvalue of $-\Delta$ under the Neumann boundary condition, and $a_1=b(\gamma+k_0)$ is the upper bound of $a(u)$ in (\ref{a(u)bounds}). Note that $\xi=1-\tau D>0$ is not assumed in the following theorem.

\begin{thm}\label{Theorem}
Assume (\ref{technical}). There exists a constant $\sigma=\sigma(b,\gamma, k, k_0, \tau)>0$ such that 
\begin{equation}\label{13}
	\sigma(1+\frac{\lambda^2}{D}) \leq D\mu_2+\delta  
\end{equation}
implies 
\begin{equation} 
\lim_{t\to\infty}\Vert u(\cdot, t)-\overline{u}(t),v(\cdot, t)-\overline{v}(t)\Vert_\infty=0 
 \label{36}
\end{equation} 
for the solution $(u,v)=(u(x,t), v(x,t))$ to (\ref{4thSystem}) for (\ref{22}), where 
\[ \overline{u}(t)=\frac{1}{\vert \Omega\vert}\int_\Omega u(x,t) \ dx, \quad \overline{v}(t)=\frac{1}{\vert\Omega\vert}\int_\Omega v(x,t) \ dx. \] 
The $\omega$-limit set $\omega(u_0, v_0)$ defined by (\ref{omega-limit}) satisfies 
\begin{equation} 
(u_\ast(\lambda), v_\ast(\lambda))\in \omega(u_0, v_0)\subset F_\lambda 
 \label{37}
\end{equation} 
where $(u_\ast(\lambda), v_\ast(\lambda))\in {\bf R}^2$ is the unique solution to (\ref{29}) for $\lambda$ in (\ref{26x}), and 
\begin{equation} 
F_\lambda=\{ (\tilde u_\ast, \tilde v_\ast)\in \overline{\bf R}_+^2 \mid \tilde u_\ast+\tau\tilde v_\ast=\lambda\}. 
 \label{flam}
\end{equation} 
\end{thm}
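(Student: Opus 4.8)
The plan is to split the solution into its spatial mean and the fluctuation about it, show that the fluctuation decays exponentially by an energy estimate, and then treat the resulting (essentially scalar) dynamics of the mean. Write $\tilde u = u - \bar u(t)$ and $\tilde v = v - \bar v(t)$, and set $g = f(u,v) = va(u)-\delta u$. Since the Neumann condition gives $\int_\Omega\Delta u = \int_\Omega\Delta v = 0$, the means satisfy $\dot{\bar u} = \bar g$ and $\tau\dot{\bar v} = -\bar g$, so that $\tilde u,\tilde v$ are mean-zero and solve
\[ \tilde u_t = D\Delta\tilde u + \tilde g, \qquad \tau\tilde v_t = \Delta\tilde v - \tilde g, \]
with $\tilde g = g - \bar g$. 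Being mean-zero, they obey the Poincar\'e--Wirtinger inequality $\|\nabla\tilde u\|_2^2 \ge \mu_2\|\tilde u\|_2^2$ and likewise for $\tilde v$, with $\mu_2$ the second Neumann eigenvalue of $-\Delta$.

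Next I would test these equations against $\alpha\tilde u$ and $\beta\tilde v$ and study the weighted energy $E(t) = \tfrac{\alpha}{2}\|\tilde u\|_2^2 + \tfrac{\tau\beta}{2}\|\tilde v\|_2^2$ with weights $\alpha,\beta>0$ to be optimized. Integration by parts yields the dissipation $-\alpha D\|\nabla\tilde u\|_2^2 - \beta\|\nabla\tilde v\|_2^2$ plus the reaction term $\int_\Omega g(\alpha\tilde u - \beta\tilde v)$. Writing $a(u) = a(\bar u) + (a(u)-a(\bar u))$ and using $\int_\Omega u\tilde u = \|\tilde u\|_2^2$, $\int_\Omega u\tilde v = \int_\Omega\tilde u\tilde v$, the reaction term produces two genuinely dissipative contributions $-\alpha\delta\|\tilde u\|_2^2$ and $-\beta a(\bar u)\|\tilde v\|_2^2$ (the latter using $a(\bar u)\ge a_0>0$), a quadratic remainder $\alpha\bar v\int_\Omega a'\tilde u^2$, bilinear cross terms of size $C_{\mathrm{cr}}\|\tilde u\|_2\|\tilde v\|_2$ with $C_{\mathrm{cr}}$ controlled by $a_1$, $\delta$ and $\bar v\le\lambda/\tau$, and the cubic terms $\int_\Omega a'\tilde u^2\tilde v$ and $\int_\Omega a'\tilde u\tilde v^2$. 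With the Poincar\'e estimate this gives a bound of the form
\[ \dot E \le -\alpha\big(D\mu_2+\delta-\bar v\|a'\|_\infty\big)\|\tilde u\|_2^2 - \beta\big(\mu_2+a(\bar u)\big)\|\tilde v\|_2^2 + (\mathrm{cross}) + (\mathrm{cubic}). \]

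The cross terms I would absorb by Young's inequality, splitting each $\|\tilde u\|_2\|\tilde v\|_2$ between the large $\tilde u$-reservoir $\alpha(D\mu_2+\delta)$ and the $\tilde v$-reservoir $\beta(\mu_2+a_0)$; this is exactly the mechanism forcing $D\mu_2+\delta$ to be large, and optimizing the ratio $\beta/\alpha$ is what should reproduce the quantitative thresholds \eqref{technical} and \eqref{13}. The cubic terms I would control by reserving a fixed fraction of the gradient dissipation: bounding, for instance, $\int_\Omega a'\tilde u^2\tilde v \le \|a'\|_\infty\|\tilde u\|_4^2\|\tilde v\|_2$ and applying the Gagliardo--Nirenberg inequality $\|\tilde u\|_4 \le C\|\nabla\tilde u\|_2^{N/4}\|\tilde u\|_2^{1-N/4}$, valid for mean-zero functions in dimension $N\le 3$, followed by Young to absorb the resulting sub-quadratic power of $\|\nabla\tilde u\|_2$ into $\alpha D\|\nabla\tilde u\|_2^2$; the uniform-in-time bounds of the solution from Theorem 1.1 of \cite{ftm} keep the remaining lower-order factors under control. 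Under \eqref{technical} and \eqref{13} these estimates should close to $\dot E \le -cE$ for some $c>0$, giving exponential decay of $\|\tilde u\|_2$ and $\|\tilde v\|_2$. Since the orbit is precompact in $X = C^2(\overline\Omega)^2$ by \cite{ftm}, $L^2$-decay of the fluctuations upgrades to the uniform convergence \eqref{36}.

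Finally, for \eqref{37}, the convergence \eqref{36} forces every point of $\omega(u_0,v_0)$ to be spatially homogeneous, and mass conservation \eqref{totalmass} confines it to $F_\lambda$. To produce the constant equilibrium, I would pass to the mean: as the fluctuations vanish, $\overline{va(u)}\to \bar v\,a(\bar u)$, so with $\bar v = (\lambda-\bar u)/\tau$ the average obeys the asymptotically autonomous scalar equation $\dot{\bar u} = \phi(\bar u) + o(1)$, $\phi(\bar u) = \tau^{-1}(\lambda-\bar u)a(\bar u) - \delta\bar u$. Since $\phi(0)>0$, $\phi(\lambda)<0$, and $\phi$ has a unique zero in $(0,\lambda)$ --- namely $u_\ast(\lambda)$, which is the content of Proposition \ref{pro1} --- that zero is globally attracting for the limit equation, so by the standard theory of asymptotically autonomous systems $\bar u(t)\to u_\ast(\lambda)$ and hence $(u_\ast(\lambda),v_\ast(\lambda))\in\omega(u_0,v_0)\subset F_\lambda$. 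I expect the main obstacle to be the joint control of the cubic reaction terms and the bilinear cross terms: the cubic terms are what require the Gagliardo--Nirenberg machinery and hence the restriction $N\le 3$, while matching the bilinear terms against the available dissipation is what dictates the precise algebraic form of \eqref{technical} and \eqref{13} (the $\tau^3$ weight and the $\lambda^2/D$ scaling), and carrying the constants through the Young splittings so that they assemble into exactly those thresholds is the delicate part.
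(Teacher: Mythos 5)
Your overall architecture --- decay of the spatial fluctuations plus a scalar argument for the means --- matches the paper's, and your treatment of the mean (the limit ODE $\dot{\overline u}=\tau^{-1}(\lambda-\overline u)a(\overline u)-\delta\overline u$ with its unique zero $u_\ast(\lambda)$ from Proposition \ref{pro1}) is essentially the paper's Lemma \ref{lemma7} combined with invariance of the $\omega$-limit set. But the core of your argument, the two-component weighted energy estimate for $(\tilde u,\tilde v)$, is not what the paper does, and as written it has a genuine gap. Your damping deficit for $\tilde u$ is $D\mu_2+\delta-\overline v\Vert a'\Vert_\infty$, and since $\overline v\le\lambda/\tau$ this requires $D\mu_2+\delta\gtrsim\lambda$, \emph{linearly} in $\lambda$. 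Hypothesis (\ref{13}) does not give this: the minimum of $(1+\lambda^2/D)/\lambda$ over $\lambda>0$ is $2/\sqrt{D}$, so $\sigma(1+\lambda^2/D)\ge C\lambda$ fails for $\lambda$ of order $D$ once $D$ is large and $\sigma$ is independent of $D$. The reason the hypothesis carries $\lambda^2/D$ rather than $\lambda$ is that the term involving the oscillation of $a(u)$ must be routed through the gradient dissipation: the paper bounds $a(u)\overline v-\overline{a(u)v}$ in $L^2$ by $\Vert v\Vert_2\,\mu_2^{-1/2}\Vert\nabla a(u)\Vert_2\le C_4\lambda\,\mu_2^{-1/2}b\gamma\alpha(k)\Vert\nabla u\Vert_2$ (Poincar\'e--Wirtinger applied to $a(u)$, plus the $N\le3$ semigroup bound $\Vert v\Vert_2\le C_4\lambda$) and then absorbs $\Vert\nabla u\Vert_2$ into $D\Vert\nabla u\Vert_2^2$ by Young; that is exactly where the $\lambda^2/D$ in (\ref{if}), hence in (\ref{13}), originates. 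Your pointwise bound $\vert a(u)-a(\overline u)\vert\le\Vert a'\Vert_\infty\vert\tilde u\vert$ discards this $1/D$ gain.

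A second, structural issue: your scheme never uses hypothesis (\ref{technical}), and it is not clear it closes without the paper's substitute for it. In your energy the $\tilde v$-reservoir is only $\beta(\mu_2+a_0)$ --- it does not grow with $D$ --- so absorbing simultaneously the cross terms $\alpha a_1\Vert\tilde u\Vert_2\Vert\tilde v\Vert_2$ and $\beta\bigl(\delta+\overline v\Vert a'\Vert_\infty\bigr)\Vert\tilde u\Vert_2\Vert\tilde v\Vert_2$ imposes a compatibility constraint on $\beta/\alpha$ (roughly $a_1\delta\lesssim(\mu_2+a_0)(D\mu_2+\delta)$) which the hypotheses do not supply for large $\delta$. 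The paper avoids estimating the reaction term of the $v$-equation altogether: it passes to $w=Du+v$, observes $(\tau w+\xi u-\lambda)_t=\Delta(w-\overline w)$, and tests with $(-\Delta)^{-1}(\tau w+\xi u-\lambda)$ to get the \emph{unconditional} duality bound (\ref{kLemBound}); this is where $\xi=1-\tau D$ and hence (\ref{technical}) enter, and it is combined with an energy estimate for $u-\overline u$ alone, (\ref{ul2distFrw}), to yield $\int_0^\infty\Vert w-\overline w\Vert_2^2\,dt<\infty$ --- note the paper concludes via square-integrability in time plus uniform continuity, not via $\dot E\le -cE$. Your cubic terms and the Gagliardo--Nirenberg step are likewise artifacts of Taylor-expanding $a(u)$ inside a $v$-weighted integral; the paper's splitting $(a(u)(v-\overline v),u-\overline u)+(a(u)\overline v-\overline{a(u)v},u-\overline u)$ keeps everything quadratic. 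To salvage your route you would need, at minimum, to rewire the $\overline v\,a'$ terms through $\Vert\nabla u\Vert_2$ as above and to find a replacement for the duality lemma that controls $\Vert v-\overline v\Vert_2$ without spending the $v$-equation's $D$-independent dissipation.
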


Since wave-propagation phenomena are reported in numerical simulations \cite{oi07}, \cite{he16}, we can suspect some dynamics inside $\omega(u_0, v_0)$ for the general case.  In accordance with the conclusion (\ref{37}), there is a possibility for $\omega(u_0, v_0)$ to contain the spatially homogeneous orbit of (\ref{4thSystem}). See the final remark of the present paper. Concluding the present section, we refer to \cite{henry} for fundamental concepts on the dynamical systems, $\omega$-limit sets and LaSalle's principle used below. 

\section{Proof of Theorem \ref{Theorem}}
Using $w=Du+v$, we transform (\ref{4thSystem}) into 
\begin{eqnarray} 
& & u_t-D\Delta u+b(u)=a(u)w, \nonumber\\ 
& & \tau w_t+\xi u_t=\Delta w \qquad \qquad \qquad \ \ \mbox{in $\Omega\times (0,T)$} \nonumber\\ 
& & \frac{\partial}{\partial \nu}(u, w)=0 \qquad \qquad \qquad \qquad \mbox{on $\partial\Omega\times (0,T)$} \nonumber\\ 
& & \left. (u, w)\right\vert_{t=0}=(u_0(x),w_0(x)) \qquad \ \mbox{in $\Omega$}, 
 \label{uw} 
\end{eqnarray}  
where 
\[ w_0=Du_0+v_0, \quad \xi=1-\tau D, \quad b(u)=\delta u+Da(u)u. \]

\begin{lem}
	The solution $(u,w)$ to (\ref{uw}) satisfies the estimate:
	\begin{equation}\label{kLemBound}
		\int_0^T(w-\overline{w},\tau w+\xi u-\lambda)\;dt\leq C_3, 
	\end{equation}
	where 
	\[ \overline{w}=\frac{1}{\vert \Omega\vert}\int_\Omega w \ dx. \] 
\end{lem}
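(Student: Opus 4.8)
The plan is to recognise the second factor $\tau w+\xi u-\lambda$ as a mean‑zero function whose evolution is dictated by the second equation of (\ref{uw}), and then to rewrite the time‑integrand as a perfect time derivative of an $H^{-1}$‑type quantity, so that the time integral telescopes and is controlled by the initial data alone.

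First I would record two structural facts. Since $w=Du+v$ and $\xi=1-\tau D$, one has $\tau w+\xi u=u+\tau v$, so by the mass conservation (\ref{totalmass}) the function $m:=\tau w+\xi u-\lambda$ has zero spatial mean for every $t$; equivalently, integrating the second equation $(\tau w+\xi u)_t=\Delta w$ over $\Omega$ and using the Neumann condition shows $\int_\Omega(\tau w+\xi u)\,dx$ is conserved and equals $\lambda|\Omega|$. The same equation reads $m_t=\Delta w$. Note also that, because $w-\overline w$ has zero mean, the integrand in (\ref{kLemBound}) equals $(w-\overline w,\,m)$.

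Next, since $m(\cdot,t)$ has zero mean I would introduce the potential $\phi=\phi(\cdot,t)\in H^1(\Omega)$ solving the Neumann problem $-\Delta\phi=m$, $\left.\frac{\partial\phi}{\partial\nu}\right\vert_{\partial\Omega}=0$, $\int_\Omega\phi\,dx=0$, uniquely solvable precisely because $m$ integrates to zero. Testing $m_t=\Delta w$ against $\phi$ and using the self‑adjointness of $\Delta$ under the Neumann condition together with $\int_\Omega m\,dx=0$ gives, on the one hand,
\[ \int_\Omega m_t\,\phi\,dx=\int_\Omega w\,\Delta\phi\,dx=-\int_\Omega w\,m\,dx=-(w-\overline w,\,m), \]
while $m=-\Delta\phi$ gives, on the other hand,
\[ \int_\Omega m_t\,\phi\,dx=\int_\Omega(-\Delta\phi_t)\phi\,dx=\int_\Omega\nabla\phi_t\cdot\nabla\phi\,dx=\tfrac12\tfrac{d}{dt}\Vert\nabla\phi\Vert_2^2 . \]
Combining the two identities yields $(w-\overline w,\,m)=-\tfrac12\tfrac{d}{dt}\Vert\nabla\phi\Vert_2^2$, so that integrating over $(0,T)$ telescopes:
\[ \int_0^T(w-\overline w,\,\tau w+\xi u-\lambda)\,dt=\tfrac12\bigl(\Vert\nabla\phi(\cdot,0)\Vert_2^2-\Vert\nabla\phi(\cdot,T)\Vert_2^2\bigr)\le\tfrac12\Vert\nabla\phi(\cdot,0)\Vert_2^2=:C_3 . \]
Here $C_3$ is the $H^{-1}$‑norm of the initial datum $\tau w_0+\xi u_0-\lambda$ and is in particular independent of $T$.

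The substantive point is the identification of the right‑hand side as an exact time derivative via the $H^{-1}$ lifting $\phi$; everything else is bookkeeping. The only things requiring care are the solvability of the elliptic problem for $\phi$ (which hinges on the zero‑mean property of $m$, i.e.\ on mass conservation) and the justification of differentiating $\Vert\nabla\phi\Vert_2^2$ in time and of the integrations by parts, both of which are guaranteed by the regularity of the classical solution $(u,w)$ and the smoothness of the initial data.
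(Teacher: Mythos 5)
Your argument is correct and is essentially the paper's own proof: the paper writes the same computation abstractly via the inverse Neumann Laplacian, obtaining $\frac12\frac{d}{dt}\bigl((-\Delta)^{-1}(\tau w+\xi u-\lambda),\tau w+\xi u-\lambda\bigr)+(w-\overline w,\tau w+\xi u-\lambda)=0$ and concluding by the positivity and self-adjointness of $(-\Delta)^{-1}$. Your potential $\phi$ satisfies $\Vert\nabla\phi\Vert_2^2=\bigl((-\Delta)^{-1}m,m\bigr)$, so your telescoping identity and your constant $C_3$ coincide with the paper's.
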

\begin{proof}
Recalling 
\[ \tau w+\xi u=u+\tau v, \quad \frac{1}{\vert \Omega\vert}\int_\Omega u+\tau v \ dx=\lambda, \] 
we obtain 
\begin{equation}\label{kLem1}
	(\tau w+\xi u -\lambda)_t-\Delta(w-\overline{w})=0, \quad \left. \frac{\partial w}{\partial \nu}\right\vert_{\partial\Omega}=0
\end{equation}
by (\ref{uw}). By  
\[ \int_\Omega \tau w+\xi u -\lambda \; dx=\int_\Omega \tau w+\xi u -\lambda \;dx=0 \] 
it holds that 
\[ (-\Delta)^{-1}(\tau w+\xi u -c)_t+(w-\overline{w})=0, \] 
where  $(-\Delta)^{-1}f=z$ denotes 
\[ -\Delta z=f \ \mbox{in $\Omega$}, \quad \left. \frac{\partial z}{\partial\nu}\right\vert_{\partial\Omega}=0, \quad \int_\Omega z\;dx=0 \] 
for $f$ satisfying 
\[ \int_\Omega f\;dx=0. \] 
Hence there arises   
\[ \left(
	(-\Delta)^{-1}(\tau w+\xi u -\lambda)_t,\tau w+\xi u-\lambda
\right)
+
\left(
	w-\overline{w},\tau w+\xi u-\lambda
\right)
=0, \] 
where $(\cdot,\cdot)$ is the usual inner product in $L^2(\Omega)$. 

This $(-\Delta)^{-1}$ is a bounded self-adjoint operator in $E=\{ f\in L^2(\Omega) \mid \int_\Omega f \ dx=0\}$, and therefore, it holds that 
\[ 
\frac12\frac{d}{dt}\left(
	(-\Delta)^{-1}(\tau w+\xi u -\lambda),\tau w+\xi u-\lambda
\right)
+
\left(
	w-\overline{w},\tau w+\xi u-\lambda
\right)
=0. 
\] 
Then we obtain (\ref{kLemBound}) because of the positivity of $(-\Delta)^{-1}$.
	\end{proof}
Since 
\[ \lambda=\tau \overline{w}+\xi\overline{u}, \quad \overline{u}=\frac{1}{\vert\Omega\vert}\int_\Omega u \ dx \] 
we get
\begin{equation}\label{wL2bound}
	\tau\int_0^T\|w-\overline{w}\|^2_2\;dt
	\leq
	|\xi|
	\left(
	\int_0^T\|w-\overline{w}\|^2_2\;dt
	\right)^\frac12
		\left(
	\int_0^T\|u-\overline{u}\|^2_2\;dt
	\right)^\frac12
	+C_3
\end{equation}
by (\ref{kLemBound}). Here,  a result on the bounded of $v=w-Du$ follows from the second equation of (\ref{4thSystem}).
\begin{lem}
	If $N\leq3$ there holds that
	\begin{equation}
	\Vert v(\cdot,t)\Vert_2\leq C_4\lambda, \quad t\geq 1. 
	\end{equation}
\end{lem}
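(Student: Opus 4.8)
The plan is to read the conclusion off the second equation of (\ref{4thSystem}) directly, treating it as a heat equation with a positive absorption and a nonnegative source, and then to exploit the smoothing of the Neumann heat semigroup together with the $L^1$ bounds coming from mass conservation. The starting point is that nonnegativity and (\ref{totalmass}) give, for every $t\ge0$,
\[
\|u(\cdot,t)\|_1\le\lambda|\Omega|,\qquad \|v(\cdot,t)\|_1\le\frac{\lambda|\Omega|}{\tau},
\]
and in particular $\|v_0\|_1\le\lambda|\Omega|/\tau$. These are the only places where $\lambda$ enters, which is what will produce the linear dependence $C_4\lambda$.

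Next I would rewrite the $v$-equation $\tau v_t=\Delta v-a(u)v+\delta u$ as $v_t+Bv=g$, where $B=-\tfrac1\tau\Delta+\tfrac{a_0}{\tau}$ under the Neumann condition and $g=\tfrac\delta\tau u+\tfrac1\tau(a_0-a(u))v$. Since $a(u)\ge a_0$ by (\ref{a(u)bounds}) and $v\ge0$, we have $g\le\tfrac\delta\tau u$ pointwise. Because $e^{-tB}=e^{-a_0t/\tau}e^{(t/\tau)\Delta}$ is positivity preserving, the Duhamel identity yields the pointwise inequality
\[
0\le v(\cdot,t)=e^{-tB}v_0+\int_0^t e^{-(t-s)B}g(s)\,ds\le e^{-tB}v_0+\frac\delta\tau\int_0^t e^{-(t-s)B}u(\cdot,s)\,ds,
\]
so that after taking $L^2$ norms and using Minkowski's inequality,
\[
\|v(\cdot,t)\|_2\le\|e^{-tB}v_0\|_2+\frac\delta\tau\int_0^t\|e^{-(t-s)B}u(\cdot,s)\|_2\,ds .
\]

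To estimate the right-hand side I would invoke the ultracontractivity bound for the Neumann heat semigroup, $\|e^{-sB}f\|_2\le C\,e^{-a_0 s/\tau}\max\{(s/\tau)^{-N/4},1\}\,\|f\|_1$, valid on a bounded smooth domain. Inserting the $L^1$ bounds above, the homogeneous term is controlled by $Ce^{-a_0t/\tau}\max\{(t/\tau)^{-N/4},1\}\|v_0\|_1$, which for $t\ge1$ is bounded by $C'\lambda$ (this is exactly why the statement restricts to $t\ge1$: the initial layer is smoothed from $L^1$ rather than $L^2$), while the convolution term is bounded by $C''\lambda\int_0^\infty e^{-a_0 r/\tau}\max\{(r/\tau)^{-N/4},1\}\,dr$. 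The main obstacle, and the sole place where the hypothesis $N\le3$ is used, is the integrability of this last integral: the singularity $(r/\tau)^{-N/4}$ near $r=0$ is integrable precisely when $N/4<1$, i.e. $N\le3$, and the exponential factor $e^{-a_0 r/\tau}$ (with $a_0=bk_0>0$) guarantees convergence at infinity. Collecting the two contributions gives $\|v(\cdot,t)\|_2\le C_4\lambda$ for $t\ge1$, with $C_4$ depending only on $\tau,\delta,b,k_0,N$ and $\Omega$, as claimed.
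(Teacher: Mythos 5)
Your argument is correct and is essentially the paper's own proof: the authors likewise bound $\tau v_t\le\Delta v-a_0v+\delta u$, apply the comparison principle (your positivity-preservation plus Duhamel step) to reduce to the semigroup $e^{\tau^{-1}(\Delta-a_0)t}$, and then use the $L^1\to L^2$ ultracontractive estimate with the mass-conservation bounds $\Vert u\Vert_1,\Vert v_0\Vert_1\le C\lambda$, with $N\le3$ entering exactly where you say, through the integrability of $t^{-N/4}$ near $t=0$. No substantive differences.
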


\begin{proof}
By 
\[ \tau v_t\leq\Delta v-a_0v+\delta u, \quad \left. \frac{\partial v}{\partial \nu}\right\vert_{\partial\Omega}=0, \quad \left. v\right\vert_{t=0}=v_0(x), \] 
we argue as in \cite{lsy12}, recalling $a_0=b\gamma>0$ in (\ref{a(u)bounds}).  First, we apply the comparison theorem to deduce  
\begin{equation} 
\Vert v(\cdot,t)\Vert_2 \leq \Vert e^{\tau^{-1}(\Delta-a_0)t}v_0\Vert_2
	+\delta\int_0^t\Vert e^{\tau^{-1}(\Delta-a_0)(t-s)}u(s)\Vert_2\;ds, 
 \label{40}
\end{equation} 
where $\Delta$ is provided with the homogeneous Neumann boundary condition. 

Second, the semigroup estimate \cite{rothe} 
\[ \Vert e^{t\Delta}z\Vert_p\leq C_5\max\{ 1, t^{-\frac{n}{2}(\frac{1}{q}-\frac{1}{p})}\} \Vert z\Vert_q, \quad 1\leq q\leq p\leq \infty \] 
is applied to the right-hand side of (\ref{40}). It follows that 
\begin{eqnarray*}
\Vert v(\cdot,t)\Vert_2 & \leq & 
		C_5\{ \Vert v_0\Vert_1
	+
	\delta\int_0^te^{-\tau^{-1}a_0(t-s)}\left(t-s\right)^{-\frac{n}{4}}\Vert u(s)\Vert_1\;ds\\
		&\leq & 
		\lambda C_6\lambda
		\left\{ 1+
	\delta\int_0^\infty e^{-\tau^{-1}a_0t}t^{-\frac{n}{4}}\;ds
	\right\}=C_7\lambda,  
\end{eqnarray*}
provided that $t\geq 1$ and $N\leq 3$. 

\end{proof}


Recall $a_1=b(\gamma+k_0)$ in (\ref{a(u)bounds}). 
\begin{lem}
If 
\begin{equation} 
a_1(1+\frac{1}{2\tau})+\frac{4}{D}(\mu_2^{-1}b\gamma\alpha(k)C_4\lambda)^2\leq \frac{1}{2}(D\mu_2+\delta) 
 \label{if}
\end{equation} 
it holds that 
\begin{equation}\label{ul2distFrw}
\int^T_0\|u-\overline{u}\|_2^2\;dt\leq
\frac{2a_1}{\tau(\mu_2D+\delta)}\int_0^T\|w-\overline{w}\|_2^2\;dt+C_8
\end{equation}
\end{lem}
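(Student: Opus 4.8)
The plan is to derive a differential inequality for $\|u-\overline{u}\|_2^2$ by testing the $u$-equation against $u-\overline{u}$. Writing the first equation of (\ref{4thSystem}) in the form $u_t-D\Delta u+\delta u=a(u)v$ with $v=w-Du$, I multiply by $u-\overline{u}$ and integrate over $\Omega$. Since $\int_\Omega(u-\overline{u})\,dx=0$, the time-derivative term becomes $\tfrac12\tfrac{d}{dt}\|u-\overline{u}\|_2^2$, integration by parts with the Neumann condition turns the diffusion term into $D\|\nabla(u-\overline{u})\|_2^2$ (using $\nabla u=\nabla(u-\overline{u})$), and $\int_\Omega\delta u(u-\overline{u})\,dx=\delta\|u-\overline{u}\|_2^2$. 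Invoking the spectral-gap inequality $\|\nabla(u-\overline{u})\|_2^2\ge\mu_2\|u-\overline{u}\|_2^2$ then yields
\[ \tfrac12\tfrac{d}{dt}\|u-\overline{u}\|_2^2+(D\mu_2+\delta)\|u-\overline{u}\|_2^2\le\int_\Omega a(u)\,v\,(u-\overline{u})\,dx. \]
Everything hinges on estimating the right-hand side so that it is dominated by a controllable multiple of $\|u-\overline{u}\|_2^2$ plus a term carrying $\|w-\overline{w}\|_2^2$.

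To this end I split the integrand using the pointwise mean-value identity $a(u)=a(\overline{u})+a'(\eta)(u-\overline{u})$ together with $v-\overline{v}=(w-\overline{w})-D(u-\overline{u})$ (immediate from $v=w-Du$). The \emph{linear} contribution $a(\overline{u})\int_\Omega v(u-\overline{u})\,dx$ reduces, via $\int_\Omega(u-\overline{u})=0$, to $a(\overline{u})\big[\int_\Omega(w-\overline{w})(u-\overline{u})\,dx-D\|u-\overline{u}\|_2^2\big]$; the diffusion piece $-Da(\overline{u})\|u-\overline{u}\|_2^2$ has a favourable sign, while the cross term is treated by a weighted Young inequality applied to $a(\overline{u})\int_\Omega(w-\overline{w})(u-\overline{u})\le a_1\|w-\overline{w}\|_2\|u-\overline{u}\|_2$, with the weight (involving $\tau$) tuned so that, after the final time integration and division by $D\mu_2+\delta$, the $\|w-\overline{w}\|_2^2$ coefficient is exactly $\frac{2a_1}{\tau(\mu_2 D+\delta)}$, at the cost of a multiple of $\|u-\overline{u}\|_2^2$. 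The genuinely \emph{quadratic} contribution $\int_\Omega a'(\eta)\,v\,(u-\overline{u})^2\,dx$ is where the structural assumptions enter: using $0\le a'(\eta)\le b\gamma\alpha(k)$ and $v\ge0$, Hölder's inequality gives the bound $b\gamma\alpha(k)\|v\|_2\|u-\overline{u}\|_4^2$, and here I invoke the previous lemma $\|v\|_2\le C_4\lambda$.

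The remaining step bounds $\|u-\overline{u}\|_4^2$. Because $N\le3$, a Gagliardo--Nirenberg/Sobolev interpolation controls $\|u-\overline{u}\|_4^2$ by $\|\nabla(u-\overline{u})\|_2$ and $\|u-\overline{u}\|_2$ (the Poincar\'e constant $\mu_2^{-1}$ entering the estimate), and Young's inequality then splits the result into a small multiple of $\|\nabla(u-\overline{u})\|_2^2$, absorbed into the $D\|\nabla(u-\overline{u})\|_2^2$ already on the left, plus a residual $\frac4D(\mu_2^{-1}b\gamma\alpha(k)C_4\lambda)^2\|u-\overline{u}\|_2^2$. Collecting the $\|u-\overline{u}\|_2^2$ weights produced by the cross term and the quadratic term, they total $a_1(1+\frac1{2\tau})+\frac4D(\mu_2^{-1}b\gamma\alpha(k)C_4\lambda)^2$, which by hypothesis (\ref{if}) is at most $\tfrac12(D\mu_2+\delta)$; hence they are absorbed into the left-hand side, leaving $\tfrac12\tfrac{d}{dt}\|u-\overline{u}\|_2^2+\tfrac12(D\mu_2+\delta)\|u-\overline{u}\|_2^2$ bounded by a constant multiple of $\|w-\overline{w}\|_2^2$. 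Integrating over $[0,T]$, discarding the nonnegative endpoint $\|u-\overline{u}(\cdot,T)\|_2^2$, absorbing $\tfrac12\|u-\overline{u}(\cdot,0)\|_2^2$ into $C_8$, and dividing by $D\mu_2+\delta$ produces exactly (\ref{ul2distFrw}).

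I expect the main obstacle to be the quadratic term $\int_\Omega a'(\eta)\,v\,(u-\overline{u})^2\,dx$: it is responsible for the restriction $N\le3$ (needed both for $\|v\|_2\le C_4\lambda$ and for the $L^4$ interpolation), and matching the precise coefficient $\frac4D(\mu_2^{-1}b\gamma\alpha(k)C_4\lambda)^2$ in (\ref{if}) demands a careful Gagliardo--Nirenberg--Young balance of the gradient against $\|u-\overline{u}\|_2^2$. A secondary, purely computational subtlety is the choice of the Young weight in the linear cross term, which must be calibrated to $\tau$ so as to deliver the stated factor $\frac{2a_1}{\tau(\mu_2 D+\delta)}$ while keeping the absorbed $\|u-\overline{u}\|_2^2$ weight within the budget allowed by (\ref{if}).
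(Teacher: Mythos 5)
Your overall skeleton matches the paper's: an $L^2$ energy identity for $u-\overline{u}$, a splitting of the right-hand side into a cross term carrying $w-\overline{w}$ and a genuinely nonlinear remainder, absorption of the $\|u-\overline{u}\|_2^2$ contributions under (\ref{if}), and integration in time. The decisive difference is how the nonlinear remainder is treated, and there your route has a gap. The paper writes the remainder as $(a(u)\overline{v}-\overline{a(u)v},\,u-\overline{u})$, represents $a(u(x,t))\overline{v}-\overline{a(u)v}=\frac{1}{|\Omega|}\int_\Omega[a(u(x,t))-a(u(y,t))]v(y,t)\,dy$, applies Cauchy--Schwarz in $y$ and then the Poincar\'e--Wirtinger inequality (\ref{lem9calc2}) to $z=a(u)$, obtaining the trilinear bound $\|v\|_2\,\mu_2^{-1}\,b\gamma\alpha(k)\,\|\nabla u\|_2\,\|u-\overline{u}\|_2$. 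Only $L^2$ norms appear, the factor $\|\nabla u\|_2$ is absorbed by $\frac{D}{2}\|\nabla u\|_2^2$ via Young, and the residual coefficient is \emph{quadratic} in $\|v\|_2\leq C_4\lambda$ --- exactly the term $\frac{4}{D}(\mu_2^{-1}b\gamma\alpha(k)C_4\lambda)^2$ appearing in (\ref{if}).

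Your mean-value decomposition $a(u)=a(\overline{u})+a'(\eta)(u-\overline{u})$ instead leaves the term $\int_\Omega a'(\eta)v(u-\overline{u})^2\,dx$, which you bound by $b\gamma\alpha(k)\|v\|_2\|u-\overline{u}\|_4^2$ and propose to close with Gagliardo--Nirenberg. For $N=3$ this cannot reproduce (\ref{if}): there $\|f\|_4^2\lesssim\|\nabla f\|_2^{3/2}\|f\|_2^{1/2}$, so after multiplying by $\|v\|_2$ the Young exponents needed to peel off $\varepsilon\|\nabla f\|_2^2$ are $4/3$ and $4$, and the residual is proportional to $\|v\|_2^{4}\|u-\overline{u}\|_2^2$, i.e.\ quartic in $\lambda$ rather than quadratic. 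Thus your method only yields the lemma under a strictly stronger smallness condition than (\ref{if}) (the homogeneity matches only when $N=2$); the ``careful balance'' you flag as the main obstacle is in fact an obstruction for this route. Two smaller points: positivity of $v$ plays no role in your H\"older step; and you cannot both convert the full $D\|\nabla(u-\overline{u})\|_2^2$ into $D\mu_2\|u-\overline{u}\|_2^2$ at the outset and later absorb a multiple of $\|\nabla(u-\overline{u})\|_2^2$ into it --- the gradient term must be split in two, which is precisely why the surviving coefficient on the left is $\frac12(D\mu_2+\delta)$.
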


\begin{proof}
In this proof we use the notations 
\[  \Vert z\Vert^2_2=\frac{1}{\vert\Omega\vert}\int_\Omega z^2\;dx,\quad (z,\zeta)=\frac{1}{\vert\Omega\vert}\int_\Omega z\cdot \zeta\;dx, \quad \overline{z}=\frac{1}{\vert\Omega\vert}\int_\Omega z \ dx.  \] 
Then it follows that 
\[ \quad \Vert z-\overline{z}\Vert^2_2=\Vert z\Vert_2^2-\overline{z}^2, \quad (z, \zeta-\overline{\zeta})=(z-\overline{z}, \zeta-\overline{\zeta}). \] 
	
	We begin by integrating over $\Omega$ the first equation of \eqref{4thSystem} to get:
	$$
	\overline{u}_t+\delta \overline{u}=\overline{a(u)v},
	$$
	and then multiplying with $\overline{u}$
	$$
		\frac12\frac{d}{dt}\overline{u}^2+\delta \overline{u}^2=\overline{a(u)v}\cdot\overline{u}. 
	$$
	Next we test the first equation of \eqref{4thSystem} with $u$:
	$$
	\frac12\frac{d}{dt}\|u\|^2_2+D\|\nabla u\|^2_2+\delta \|u\|_2^2=(a(u)v,u).
	$$
	Subtracting the last two relations above we calculate,
	\begin{eqnarray}
	& & \frac12\frac{d}{dt}\|u-\overline{u}\|^2_2	
	+D\|\nabla u\|^2_2+\delta \|u-\overline{u}\|_2^2=(a(u)v,u)
	-\overline{a(u)v}\cdot\overline{u}
	\nonumber\\
	& & \quad =
	\left(a(u)v,u-\overline{u}\right)=(a(u)v-\overline{a(u)v},u-\overline{u}) \\ 
	& & \quad = (a(u)(v-\overline{v}), u-\overline{u})+(a(u)\overline{v}-\overline{a(u)v},u-\overline{u})  
	 \label{lem9calc}
	\end{eqnarray}
	
	Moreover, we have
	$$
	a(u)\overline{v}-\overline{a(u)v}=a(u)\frac{1}{\vert\Omega\vert}\int_\Omega v\;dy-
	\frac{1}{\vert\Omega\vert}\int_\Omega a(u)v\;dy
	$$
	or
	$$
		a(u(x,t))\overline{v}-\overline{a(u)v}=\frac{1}{\vert\Omega\vert}\int_\Omega \left[a(u(x,t))-a(u(y,t))\right]v(y,t)\;dy, 
	$$
which implies 
\[ \Vert a(u)\overline{v}-\overline{a(u)v}\Vert^2_2 
\leq \Vert v\Vert^2_2\cdot \frac{1}{\vert \Omega\vert^2}\iint_{\Omega\times\Omega} \left[a(u(x,t))-a(u(y,t))\right]^2\;dxdy. \] 
Here, the Poincar\'e-Wirtinger inequality implies 
\begin{eqnarray} 
& & \frac{1}{\vert \Omega\vert^2}\iint_{\Omega\times\Omega}  \vert z(x)-z(y)\vert^2\;dxdy \nonumber\\ 
& & \quad \leq \frac{1}{2}\cdot \frac{1}{\vert \Omega\vert^2}\iint_{\Omega\times \Omega}\vert z(x)-\overline{z}\vert^2+\vert z(y)-\overline{z}\vert^2 \ dxdy \nonumber\\ 
& & \quad =\Vert z-\overline{z}\Vert^2 \leq\mu_2^{-1}\Vert \nabla z\Vert_2^2.
 \label{lem9calc2}
\end{eqnarray}

Therefore, (\ref{lem9calc}) with the help of (\ref{lem9calc2}) becomes,
\begin{eqnarray*}
& & \frac12\frac{d}{dt}\Vert u-\overline{u}\Vert^2_2	
	+D\Vert \nabla u\Vert^2_2+\delta \Vert u-\overline{u}\Vert_2^2	
	\nonumber\\
& & \quad \leq 
	a_1\|u-\overline{u}\|_2\cdot\|v-\overline{v}\|_2
	+\|v\|_2\mu_2^{-1}\|\nabla a(u)\|_2\cdot\|u-\overline{u}\|_2
	\nonumber\\
& & \quad \leq 
	a_1
	\left\{
	\|u-\overline{u}\|_2^2+\tau^{-1}\|w-\overline{w}\|_2\cdot\|u-\overline{u}\|_2
	\right\}
	+b\mu_2^{-1}\gamma\alpha(k)\|v\|_2\|\nabla u\|_2\|u-\overline{u}\|_2
	\nonumber\\
& & \quad \leq a_1\Vert u-\overline{u}\Vert_2^2+\frac{a_1}{2\tau}\Vert w-\overline{w}\Vert_2^2+\frac{a_1}{2\tau}\Vert u-\overline{u}\Vert_2^2+\frac{D}{2}\Vert \nabla u\Vert_2^2 \\ 
& & \qquad +\frac{4}{D}(\mu_2^{-1}b\gamma\alpha(k)C_4\lambda)^2\Vert u-\overline{u}\Vert_2^2 
\end{eqnarray*}
by (\ref{a(u)bounds}), and hence 
\begin{eqnarray*}
& &\frac12\frac{d}{dt}\|u-\overline{u}\|^2_2	
	+(\frac{\mu_2D}{2}+\delta) \|u-\overline{u}\|_2^2	
	\\
& & \quad \leq 
\left[ a_1(1+\frac{1}{2\tau})+\frac{4}{D}(\mu_2^{-1}b\gamma \alpha(k)C_4\lambda)^2\right]\|u-\overline{u}\|^2_2+\frac{a_1}{2\tau}\|w-\overline{w}\|_2^2 
\end{eqnarray*}
by $a_1=b(\gamma+k_0)$. 

Thus, we conclude
$$
	\frac{d}{dt}\|u-\overline{u}\|^2_2	
	+\frac{1}{2}(\mu_2D+\delta) \|u-\overline{u}\|_2^2	
	\leq
\frac{a_1}{\tau}\|w-\overline{w}\|_2^2
$$
from (\ref{if}) and then obtain \eqref{ul2distFrw}.
\end{proof}


Inequality (\ref{if}) arises if we have (\ref{13}) for $\sigma=\sigma(b,\gamma, k, k_0, \tau)>0$ sufficiently large. Then we obtain the following lemma, recalling (\ref{omega-limit}) and (\ref{flam}). 

\begin{lem}\label{lemma6}
Assume (\ref{technical}) and inequality (\ref{13}) for $\sigma>0$ as above. Then it holds that (\ref{36}) and $\omega(u_0,v_0)\subset F_\lambda$.  
\end{lem}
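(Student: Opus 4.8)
The plan is to close the feedback loop between the two time-integrated estimates \eqref{wL2bound} and \eqref{ul2distFrw}, deduce that the spatial oscillations of $u$ and $w$ are square-integrable in time over all of $(0,\infty)$, and then convert this summability into pointwise decay. Set $P(T)=\int_0^T\|w-\overline w\|_2^2\,dt$ and $Q(T)=\int_0^T\|u-\overline u\|_2^2\,dt$. Because the standing hypotheses \eqref{technical} and \eqref{13} with $\sigma$ large force \eqref{if}, the previous lemma supplies $Q\le A\,P+C_8$ with $A=\frac{2a_1}{\tau(\mu_2D+\delta)}$, while \eqref{wL2bound} reads $\tau P\le|\xi|\,P^{1/2}Q^{1/2}+C_3$. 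Substituting the former into the latter and using $\sqrt{s+t}\le\sqrt s+\sqrt t$ yields
\[
\bigl(\tau-|\xi|A^{1/2}\bigr)\,P\le|\xi|\,C_8^{1/2}P^{1/2}+C_3.
\]
The role of the technical assumption \eqref{technical}, reinforced by the largeness of $\mu_2D+\delta$ coming from \eqref{13}, is precisely to render the bracketed coefficient $\tau-|\xi|A^{1/2}$ strictly positive; the display is then a quadratic inequality in $P^{1/2}$, bounding $P(T)$ and hence $Q(T)$ by constants independent of $T$, so that $\int_0^\infty\|u-\overline u\|_2^2\,dt<\infty$ and $\int_0^\infty\|w-\overline w\|_2^2\,dt<\infty$. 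I expect this to be the main obstacle, as it is the only step in which the precise parameter regime enters and in which the constants $C_3$, $C_8$, $A$ must conspire to give the correct sign; everything afterwards is soft.

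For the passage to asymptotic homogeneity I would lean on the compactness machinery. Since the reaction term is (sub)quadratic, Theorem~1.1 of \cite{ftm} provides a uniform-in-time bound and hence compactness of the orbit $\mathcal O=\{(u(\cdot,t),v(\cdot,t))\}_{t\ge0}$ in $X=C^2(\overline\Omega)^2$, so that $\omega(u_0,v_0)$ is nonempty, compact, connected and invariant and $\mathrm{dist}_X\bigl((u(\cdot,t),v(\cdot,t)),\omega\bigr)\to0$. The $C^2$-bound makes $t\mapsto\|u-\overline u\|_2^2+\|w-\overline w\|_2^2$ have bounded derivative (controlled through $u_t$ and $w_t$, the latter via $\tau w_t+\xi u_t=\Delta w$), hence uniformly continuous; combined with the finiteness of its integral over $(0,\infty)$, Barbalat's lemma forces it to tend to $0$. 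Thus $\|u-\overline u\|_2\to0$ and $\|w-\overline w\|_2\to0$, and the same $C^2$-compactness upgrades this $L^2$-decay to the $L^\infty$-decay \eqref{36}. In particular the continuous functional $(u,v)\mapsto\|u-\overline u\|_\infty+\|v-\overline v\|_\infty$ vanishes on the compact set $\omega(u_0,v_0)$, so every element of the $\omega$-limit set is spatially homogeneous.

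Finally, to obtain the inclusion $\omega(u_0,v_0)\subset F_\lambda$, take $(\tilde u_\ast,\tilde v_\ast)\in\omega(u_0,v_0)$ realised as a limit of $(u(\cdot,t_k),v(\cdot,t_k))$ in $X$ along some $t_k\uparrow\infty$. By the previous step it is spatially homogeneous, hence a constant pair; it is nonnegative because the solution is; and passing to the limit in the conserved mass \eqref{totalmass} gives $\tilde u_\ast+\tau\tilde v_\ast=\lambda$. Therefore $(\tilde u_\ast,\tilde v_\ast)$ lies in $F_\lambda$ as defined in \eqref{flam}, which establishes both conclusions of the lemma.
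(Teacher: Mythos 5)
Your proposal is correct and follows essentially the same route as the paper: combine \eqref{wL2bound} with \eqref{ul2distFrw} to obtain a $T$-independent bound on $\int_0^T\|w-\overline w\|_2^2\,dt$ and hence on $\int_0^T\|u-\overline u\|_2^2\,dt$, pass from time-integrability to decay via uniform continuity (the paper invokes parabolic regularity where you invoke Barbalat), upgrade $L^2$-decay to $L^\infty$-decay by compactness of the orbit, and place $\omega(u_0,v_0)$ inside $F_\lambda$ using spatial homogeneity together with mass conservation. Your explicit algebra is in fact more careful than the paper's one-line assertion that \eqref{technical} closes the loop; note only that the coefficient you need to be positive, $\tau-|\xi|\bigl(2a_1/(\tau(\mu_2D+\delta))\bigr)^{1/2}$, amounts to $2\xi^2 a_1<\tau^3(\mu_2D+\delta)$, which is \eqref{technical} with $|\xi|$ replaced by $\xi^2$ --- a discrepancy inherited from (and arguably revealing a typo in) the paper itself rather than a gap in your argument.
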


\begin{proof}
	From (\ref{wL2bound}) and (\ref{ul2distFrw}) we get
\begin{eqnarray*} 
& & \tau \int_0^T\Vert w-\overline{w}\Vert_2^2\;dt \\ 
& & \quad \leq \vert \xi\vert
	\left(\int_0^T\Vert w-\overline{w}\Vert_2^2\;dt\right)^\frac12
		\cdot \left(\frac{2a_1}{\tau(\mu_2D+\delta)}\int_0^T\Vert w-\overline{w}\Vert_2^2\;dt+C_8\right)^\frac12+C_3. 
\end{eqnarray*} 
Then (\ref{technical}) implies 
\[ \int_0^T\Vert w-\overline{w}\Vert_2^2\;dt\leq C_9 \] 
and hence 
\begin{equation} 
\int_0^\infty\Vert w-\overline{w}\Vert_2^2\;dt< +\infty. 
 \label{47}
\end{equation} 

From the parabolic regularity and uniformly boundedness of $(u,v)=(u(x,t), v(x,t))$, the mapping $t\to \|w-\overline{w}\|_2^2$ is uniformly continuous, and therefore, we have 
\[ \lim_{t\to\infty}\Vert w-\overline{w}\Vert_2^2=0 \] 
by (\ref{47}). Then, again using the parabolic regularity, we get 
\[ \lim_{t\to\infty}\Vert w-\overline{w}\Vert_\infty=0. \] 

Since (\ref{47}) implies also 
\[ \int_0^\infty\Vert u-\overline{u}\Vert_2^2 \ dt<+\infty \] 
by (\ref{ul2distFrw}), it holds that 
\[ \lim_{t\to\infty}\Vert u-\overline{u}\Vert_\infty=0 \] 
similarly.  We thus obtain (\ref{36}). 

Given $(\hat u_\ast, \hat v_\ast)\in\omega(u_0, v_0)$, therefore, we have $t_k\rightarrow\infty$ such that   
\[ \lim_{k\rightarrow\infty}\Vert \overline{u}(t_k)-\hat u_\ast, \overline{v}(t_k)-\hat v_\ast \Vert_\infty=0 \] 
by (\ref{36}). Therefore, it holds that $(\hat u_\ast, \hat v_\ast)\in F_\lambda$.  
	\end{proof}
	
Now we study the spatially homogeneous part of (\ref{4thSystem}).

	\begin{lem}\label{lemma7}
	 Take $(\tilde u_\ast, \tilde v_\ast)\in \overline{\bf R}_+^2\setminus\{(0,0)\}$ and let $(U,V)=(U(t), V(t))$ be the solution to  (\ref{4thSystem}) for $(u_0, v_0)=(\tilde u_\ast, \tilde v_\ast)$: 
\begin{equation} 
 \frac{dU}{dt}=f(U,V), \ \tau \frac{dV}{dt}=-f(U,V), \quad \left. (U,V)\right\vert_{t=0}=(\tilde u_\ast, \tilde v_\ast). 
\label{ODEeq}
\end{equation} 
Put $\lambda=\tilde u_\ast+\tau \tilde v_\ast>0$ and define $(u_\ast, v_\ast)=(u_\ast(\lambda), v_\ast(\lambda))$ as in Proposition \ref{pro1}. Then it holds that 
\begin{equation}
	\lim_{t\to\infty}(U(t),V(t))=(u_\ast,v_\ast). 
	 \label{eq51}
\end{equation}
	\end{lem}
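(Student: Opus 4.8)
The plan is to use the conservation law to collapse the planar system (\ref{ODEeq}) to a scalar autonomous ODE on a compact interval, and then to read off convergence from the sign of the reduced vector field. First I would observe that adding the two equations in (\ref{ODEeq}) gives $\frac{d}{dt}(U+\tau V)=f(U,V)-f(U,V)=0$, so that $U(t)+\tau V(t)\equiv \tilde u_\ast+\tau\tilde v_\ast=\lambda$ for all $t$. Next, quasi-positivity $f(0,v)\ge 0\ge f(u,0)$ makes the closed first quadrant $\overline{\bf R}_+^2$ positively invariant for (\ref{ODEeq}): on $\{U=0\}$ one has $\dot U=f(0,V)=V a(0)\ge 0$, while on $\{V=0\}$ one has $\tau\dot V=-f(U,0)=\delta U\ge 0$. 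Combining invariance with the conservation law confines the entire orbit to the segment $F_\lambda$ of (\ref{flam}), i.e. $U(t)\in[0,\lambda]$ with $V(t)=(\lambda-U(t))/\tau$.

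Substituting $V=(\lambda-U)/\tau$ reduces (\ref{ODEeq}) to the single scalar equation $\dot U=g(U)$, where $g(U)=\frac{\lambda-U}{\tau}a(U)-\delta U$ is smooth on $[0,\lambda]$. At the endpoints $g(0)=\frac{\lambda}{\tau}a(0)=\frac{\lambda bk_0}{\tau}>0$ and $g(\lambda)=-\delta\lambda<0$, so the field points strictly inward and $[0,\lambda]$ is positively invariant for the scalar flow. A zero of $g$ in $(0,\lambda)$ is precisely a point $(u,(\lambda-u)/\tau)$ solving (\ref{29}); indeed a short computation identifies $g(u)=0$ with the equation $A(u)=B(u)$ from the proof of Proposition \ref{pro1} (using $\frac{\gamma u^2}{k^2+u^2}=\gamma-B(u)$). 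Hence by Proposition \ref{pro1} the function $g$ has exactly one zero $u_\ast\in(0,\lambda)$.

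With a single interior zero and $g(0)>0$, $g(\lambda)<0$, continuity forces $g>0$ on $[0,u_\ast)$ and $g<0$ on $(u_\ast,\lambda]$; thus $u_\ast$ is the only equilibrium of the scalar flow in $[0,\lambda]$ and it is globally attracting there. Concretely, $U(t)$ is monotone (increasing if $U(0)<u_\ast$, decreasing if $U(0)>u_\ast$), bounded, and hence converges to a limit $L\in[0,\lambda]$ with $g(L)=0$; since $u_\ast$ is the unique such zero, $L=u_\ast$. Passing back through $V=(\lambda-U)/\tau$ then yields $V(t)\to(\lambda-u_\ast)/\tau=v_\ast$, which is (\ref{eq51}). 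As an alternative to the monotonicity argument one could equally invoke LaSalle's principle with the potential $\Phi(U)=-\int^U g$, for which $\frac{d}{dt}\Phi(U)=-g(U)^2\le 0$.

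I do not foresee a serious obstacle: once the conservation law is exploited, everything becomes one-dimensional and elementary. The two points that need care are the boundary behaviour, namely verifying that the inward-pointing field keeps the orbit inside $F_\lambda$ and prevents escape through $U=0$ or $U=\lambda$, and the bookkeeping that matches the zero set of $g$ with the $A=B$ equation, so that the uniqueness already established in Proposition \ref{pro1} can be invoked directly rather than re-proved.
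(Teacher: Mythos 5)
Your proposal is correct and follows essentially the same route as the paper: use the conservation law $U+\tau V\equiv\lambda$ to collapse (\ref{ODEeq}) to the scalar equation $\dot U=-\delta U+\tau^{-1}a(U)(\lambda-U)$ and identify its unique rest point with the $(u_\ast,v_\ast)$ of Proposition~\ref{pro1}. The only (harmless) difference is in the final step, where you conclude convergence by the elementary sign/monotonicity argument for one-dimensional autonomous flows, whereas the paper uses the antiderivative $G$ as a Lyapunov function together with LaSalle's principle --- precisely the alternative you mention at the end.
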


\begin{proof}
First, we have 
\[ \frac{d}{dt}(U+\tau V)=0 \] 
and hence 
\begin{equation} 
U+\tau V=\lambda. 
 \label{52}
\end{equation} 
Then (\ref{ODEeq}) is reduced to the single system, 
\begin{equation} 
\frac{dU}{dt}=-\delta U+a(U)\tau^{-1}(\lambda-U), \quad \left. U\right\vert_{t=0}=\tilde u_\ast\geq 0 
 \label{41}
\end{equation} 
by (\ref{model4})-(\ref{22}). This system is a spatially homogeneous part of (\ref{4thSystem}), and therefore, there is a global-in-time uniformly bounded orbit ${\cal O}=\{ U(t)\}$ in $\overline{{\bf R}}_+$. 

Finding $G(U)$ such that 
\[ G'(U)=-\delta U+a(U)\tau^{-1}(\lambda-U), \] 
we obtain 
\[ \frac{d}{dt}G(U)=G'(U)\frac{dU}{dt}=[G'(U)]^2\geq 0. \] 
This $-G(U)$ is a Lyapunov function, and therefore, there is 
\[ \lim_{t\rightarrow \infty}G(U(t))=G_\infty \] 
from the compactness of ${\cal O}$. 

The $\omega$-limit set for (\ref{41}) is defined by 
\[ \omega(\tilde u_\ast)=\{ \hat u_\ast \mid \exists t_k\rightarrow \infty \ \mbox{such that} \ \lim_{k\rightarrow \infty}\vert U(t_k)=\hat u_\ast\}. \] 
It is invariant under the flow defined by (\ref{41}). Hence given $\hat u_\ast\in \omega(\tilde u_\ast)$, it holds that 
\[ \tilde U(t)\in \omega (\tilde u_\ast), \quad t\geq 0 \] 
for the solution $\tilde U=\tilde U(t)$ to (\ref{41}) with the initial value $\tilde u_\ast$ replaced by $\hat u_\ast$. It holds also that 
\[ G(\hat u_\ast) = G_\infty, \quad \forall \hat u_\ast \in \omega(\tilde u_\ast), \] 
and hence $G'(U(t))\equiv 0$ (LaSalle's principle). 

This property means that $(U(t), V(t))$ with $V(t)$ defined by (\ref{52}) is a spatially homogeneous stationary solution in Proposition \ref{pro1}, which implies 
\[ \omega(\tilde u_\ast)=\{ u_\ast(\lambda)\}. \] 
Thus we obtain $\lim_{t\rightarrow\infty}U(t)=u_\ast(\lambda)$ ande hence 
$\lim_{t\rightarrow \infty}V(t)=v_\ast(\lambda)$ 
again by (\ref{52}). 
\end{proof}
	Finally, we combine all the above results to prove our main Theorem:
\begin{proof}[Proof of Theorem \ref{Theorem}]
It suffices to show $(u_\ast(\lambda), v_\ast(\lambda))\in \omega(u_0, v_0)$. 

Given $(\tilde u_\ast, \tilde v_\ast)\in \omega(u_0, v_0)\subset F_\lambda$, let $(U,V)$ be the solution to (\ref{4thSystem}) with the initial value $(u_0,v_0)$ replaced by $(\tilde u_\ast, \tilde v_\ast)$. This solution $(U,V)=(U(t), V(t))$ is spatially homogeneous as in (\ref{ODEeq}), and hence it follows that (\ref{eq51}). Then we obtain $(u_\ast(\lambda), v_\ast(\lambda))\in \omega(u_0, v_0)$ from the invariance of the $\omega$-limit set under the flow of (\ref{4thSystem}). 
\end{proof}

\begin{rmk} 
If  $\exists (\tilde u_\ast, \tilde v_\ast)\in \omega(u_0, v_0)\setminus \{ (u_\ast(\lambda), v_\ast(\lambda))\}$ is the case, it follows that $\{ (U(t), V(t))\} \subset \omega(u_0, v_0)$ for $U=U(t)$ and $V=V(t)$ defined by (\ref{41}) and (\ref{52}).   
\end{rmk}


\end{document}